\newcommand{\N}{{\mathbb{N}}}
\newcommand{\Z}{{\mathbb{Z}}}
\newcommand{\C}{{\mathbb{C}}}
\newcommand{\R}{{\mathbb{R}}}
\newcommand{\T}{{\mathbb{T}}}
\let\Re=\undefined\DeclareMathOperator*{\Re}{Re}
\DeclareMathOperator*{\lcm}{lcm}
\newtheorem{theorem}{Theorem}[section]
\newtheorem{lemma}[theorem]{Lemma}
\newtheorem{proposition}[theorem]{Proposition}
\theoremstyle{definition}
\newtheorem{definition}[theorem]{Definition}
\newtheorem{remark}[theorem]{Remark}
\newcounter{smalllist}
\numberwithin{equation}{section}
\newcommand{\qtq}[1]{\quad\text{#1}\quad}
\newcommand{\eps}{{\varepsilon}}
\begin{document}

\title[Scale invariant Strichartz estimates on tori and applications]{Scale invariant Strichartz estimates on tori\\and applications}

\author{Rowan Killip and Monica Vi\c{s}an}

\address
{Rowan Killip\\
Department of Mathematics\\
University of California, Los Angeles, CA 90095, USA}
\email{killip@math.ucla.edu}

\address
{Monica Vi\c{s}an\\
Department of Mathematics\\
University of California, Los Angeles, CA 90095, USA}
\email{visan@math.ucla.edu}

\begin{abstract}
We prove scale-invariant Strichartz inequalities for the Schr\"odinger equation on rectangular tori (rational or irrational) in all dimensions.
We use these estimates to give  a unified and simpler treatment of local well-posedness of the energy-critical nonlinear Schr\"odinger equation
in dimensions three and four. 
\end{abstract}

\maketitle


\section{Introduction}

The most general flat torus is formed as the quotient of $\R^d$ by a lattice.  In this paper, we will only consider rectangular tori,
namely, those of the form
$
\R^d / ( L_1\Z \times L_2\Z \times\cdots\times L_d \Z^d)
$
with $L_1,\ldots,L_d\in(0,\infty)$. 
Our goal is to prove certain space-time estimates for solutions of the linear Schr\"odinger equation on such manifolds.

Notationally, it will be simpler to fix the base space to be $\T^d:=\R^d/\Z^d$ and to incorporate the geometry of the torus into
the definition of the Laplacian.  Put differently, we use coordinates based on the standard torus and then use the Laplace--Beltrami operator
associated to the induced metric, that is,
$$
\Delta:= \sum_{j=1}^d \theta_j  \frac{\partial^2\ }{\partial x_j^2}, \qtq{or equivalently,} \widehat{\Delta f} (k):= - \sum_{j=1}^d \theta_j^{ } k_j^2 \hat f(k).
$$
Here $\theta_j = L_j^{-2}$ and we employ the following convention for the Fourier transform:
$$
\hat f (k) = \int_{\T^d} e^{-2\pi i k x} f(x) \,dx
\qtq{so that}
f (x) = \sum_{k\in \Z^d} e^{2\pi i k x} \hat f(k).
$$

With these notations, the solution $u(t,x)$ to the linear Schr\"odinger equation with initial data $u_0(x)$ is given by
\begin{align}\label{Fseries}
u(t,x) = e^{it\Delta} u_0 = \sum_{k\in \Z^d} \exp\Bigl\{2\pi i \bigl[k x-t{\textstyle\sum_{j=1}^d} \theta_j^{ } k_j^2\bigr]\Bigr\} \widehat{u_0}(k).
\end{align}
Note that by making a change of variables in time, there is no loss of generality to assume that $\theta_1, \ldots, \theta_d\in(0,1]$.

The main result of this paper is the following:

\begin{theorem}[Scale-invariant Strichartz estimates]\label{T:Strichartz}
Fix $d\geq 1$, $\theta_1, \ldots, \theta_d\in (0,1]$, $1\leq N\in 2^{\Z}$, and $p>\frac{2(d+2)}{d}$.  Then
\begin{equation}\label{E:T:Strichartz}
\|e^{it\Delta}P_{\leq N} f\|_{L_{t,x}^p([0,1]\times\T^d)}\lesssim N^{\frac d2- \frac{d+2}p}\|f\|_{L_x^2},
\end{equation}
where $\Delta:=\theta_1 \partial_{x_1}^2 + \cdots + \theta_d \partial_{x_d}^2$.
\end{theorem}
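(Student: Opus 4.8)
The plan is to reduce the scale-invariant estimate on the torus to a known Strichartz estimate on the full space $\R^d$ by exploiting the ``asymptotically Euclidean'' nature of the flow at scale $N^{-1}$ on the time interval $[0,1]$. Concretely, I would first reduce to the range $p$ slightly above the Stein--Tomas exponent $p_d:=\frac{2(d+2)}{d}$, since for larger $p$ one can interpolate with the trivial $L^\infty_{t,x}$ bound $\|e^{it\Delta}P_{\le N}f\|_{L^\infty_{t,x}}\lesssim N^{d/2}\|f\|_{L^2}$ (from Bernstein), and the scaling exponent $\frac d2-\frac{d+2}p$ is precisely what the bound at $p_d$ interpolated with $L^\infty$ produces. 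So it suffices to prove, for every $\eps>0$,
\begin{equation*}
\|e^{it\Delta}P_{\le N}f\|_{L^{p_d+\eps}_{t,x}([0,1]\times\T^d)}\lesssim_\eps N^{\frac d2-\frac{d+2}{p_d}}\|f\|_{L^2}= N^{0}\|f\|_{L^2},
\end{equation*}
wait --- at $p=p_d$ the exponent $\frac d2-\frac{d+2}{p_d}$ vanishes, but we are only allowed $p>p_d$; so really the target is the endpoint-type bound with an $\eps$-loss, $\lesssim_\eps N^\eps\|f\|_{L^2}$, and then a second interpolation with $L^\infty_{t,x}$ upgrades this to the sharp exponent for any fixed $p>p_d$. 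Thus the whole problem is an (almost) $L^{p_d}$ bound with epsilon loss.

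The core step is a comparison/approximation argument. Fix a frequency-localized datum $f=P_{\le N}f$. I would partition the time interval $[0,1]$ into $\sim N^2$ subintervals $I$ of length $N^{-2}$ (the natural time scale for frequencies $\lesssim N$), and on each such interval write the torus solution as a periodization of a Euclidean solution: if $\tilde f$ is an extension/lift of the Fourier data to a Schwartz function on $\R^d$ frequency-supported in a ball of radius $\sim N$, then on $[0,N^{-2}]\times\T^d$ the function $e^{it\Delta}P_{\le N}f$ agrees with $\sum_{n\in\Z^d}(e^{it\Delta_{\R^d}}\tilde f)(x+n)$ up to acceptable errors, where $\Delta_{\R^d}$ is the constant-coefficient operator $\sum\theta_j\partial_{x_j}^2$ on $\R^d$. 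Because on a time interval of length $N^{-2}$ a wave packet of frequency $N$ travels a distance $O(1)$, the periodization sum is essentially a single term locally, and one can transfer the sharp (scale-invariant, by parabolic rescaling) $L^{p_d+\eps}$ Strichartz inequality on $\R^d$ --- i.e. the Stein--Tomas / Strichartz estimate $\|e^{it\Delta_{\R^d}}g\|_{L^{p_d}_{t,x}(\R\times\R^d)}\lesssim\|g\|_{L^2}$, or its Bourgain--Demeter decoupling-based $L^{p_d+\eps}$ refinement --- to each subinterval $I$, picking up at most $N^\eps$. Summing the $N^2$ local contributions in $\ell^{p_d+\eps}$ and using almost-orthogonality in the frequency variable (the data on different subintervals come from the same $f$, so one uses $\ell^{p}\hookrightarrow\ell^2$ together with an $L^2$ bound $\sum_I\|e^{it\Delta}P_{\le N}f\|^2_{L^2_{t,x}(I\times\T^d)}= \|f\|_{L^2}^2$) closes the estimate; the spatial periodization is handled by noting $\T^d$ is a single fundamental domain so there is no sum to control there, only the temporal pieces.

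The main obstacle --- and the place where I expect the real work to be --- is making the Euclidean-to-torus transference \emph{quantitatively sharp}, i.e. with no loss of powers of $N$ beyond the unavoidable $N^\eps$. Several things must be controlled carefully: (i) the choice of the band-limited extension $\tilde f$ and the proof that the periodization error (tails of the heat/Schr\"odinger kernel outside the fundamental domain, after time $N^{-2}$) is negligible in $L^{p_d}_{t,x}$ --- this requires good pointwise kernel bounds, e.g. $\||e^{it\Delta_{\R^d}}P_{\le N}|$'s kernel decays rapidly at distances $\gg N|t|+N^{-1}$; (ii) ensuring the multiplier $P_{\le N}$ on $\T^d$ and its Euclidean counterpart are compatible, which is a routine but slightly delicate comparison of the periodic and Euclidean Littlewood--Paley cutoffs; and (iii) organizing the sum over the $N^2$ time subintervals so that the $\eps$-losses do not compound --- one wants a single $N^\eps$, not $N^{2\eps}$ or worse, which is achieved by using the scale-invariance of the Euclidean estimate (so each subinterval contributes the same constant) together with a clean almost-orthogonality bound in time. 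An alternative route that sidesteps some of this is to prove \eqref{E:T:Strichartz} directly by $\ell^2$ decoupling à la Bourgain--Demeter for the truncated paraboloid, viewing the exponential sum \eqref{Fseries} as an extension operator; then the only torus-specific input is that one restricts to frequencies $k\in\Z^d$ with $|k|\le N$ and time in $[0,1]$, so after parabolic rescaling $k\mapsto k/N$, $t\mapsto N^2 t$ the frequencies become $N^{-1}$-separated points on the paraboloid over a ball of radius $1$ and decoupling gives exactly the $N^\eps$ loss. I would present the decoupling route as the clean proof and mention the transference heuristic for intuition, with the coefficient vector $\{\widehat{u_0}(k)\}$ playing the role of the $\ell^2$-normalized weights; the key technical lemma to isolate is the passage from the continuous $\ell^2$-decoupling inequality on $\R^d$ to the discrete exponential-sum estimate over the lattice, which is standard (periodization in time over $[0,1]$ and a change of variables) but should be stated as its own lemma so that the dimension-dependent bookkeeping is contained.
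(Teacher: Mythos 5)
There is a genuine gap, and it sits at the very first step of your plan. You propose to prove an estimate with an $N^\eps$ loss at (or just above) the critical exponent $p_d=\frac{2(d+2)}d$ and then ``upgrade'' it to the sharp exponent at any fixed $p>p_d$ by interpolating with the Bernstein bound $\|e^{it\Delta}P_{\le N}f\|_{L^\infty_{t,x}}\lesssim N^{d/2}\|f\|_{L^2}$. This upgrade does not work: the target exponent $\frac d2-\frac{d+2}p$ is \emph{exactly} what interpolation between a loss-free bound at $p_d$ and the $N^{d/2}$ bound at $\infty$ would produce, so there is no slack whatsoever. Quantitatively, interpolating $N^{\eps}\|f\|_{L^2}$ at $p_d$ with $N^{d/2}\|f\|_{L^2}$ at $\infty$ gives, at exponent $p$, the bound $N^{\frac d2-\frac{d+2}p+\eps\frac{p_d}{p}}\|f\|_{L^2}$; the $\eps$-loss survives for every $p$, and you merely recover Theorem~\ref{T:epsStrichartz}, not Theorem~\ref{T:Strichartz}. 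Since the loss-free endpoint bound at $p_d$ is in fact \emph{false} on the torus (as noted after the theorem, following \cite{Bourgain:TorusStrichartz}), no choice of $\eps$ repairs this. For the same reason, the second half of your proposal (transference/periodization over time intervals of length $N^{-2}$, or $\ell^2$ decoupling for the truncated paraboloid) at best reproves the Bourgain--Demeter estimate, which the paper simply quotes as Theorem~\ref{T:epsStrichartz}; the entire difficulty of Theorem~\ref{T:Strichartz} is in removing the $N^{\eta}$.

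There is also a quantitative problem inside the transference step itself: after splitting $[0,1]$ into $\sim N^2$ intervals of length $N^{-2}$ and applying a loss-free Euclidean bound on each, summing the pieces in $\ell^{p}$ costs a factor $N^{2/p}$, and the ``almost-orthogonality'' you invoke (the identity $\sum_I\|u\|_{L^2_{t,x}(I\times\T^d)}^2=\|f\|_{L^2}^2$) does not remove it --- the local pieces are far from orthogonal precisely because the solution refocuses at the (many) times where each $\theta_j t$ is close to a rational with small denominator. That refocusing is the real enemy, and the paper's proof is organized around it: one runs a level-set argument, using Theorem~\ref{T:epsStrichartz} only for levels $\lambda\le N^{\frac d2-\delta}$, and for large levels one bounds $\lambda^2|\Omega_\omega|^2\lesssim\langle\chi_{\Omega_\omega},K_N\chi_{\Omega_\omega}\rangle$ and splits $K_N=\tilde K_N+(K_N-\tilde K_N)$, where $\tilde K_N$ retains only the major-arc times $\mathcal T$. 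The new input is Proposition~\ref{P:Strichartz}, proved from the Weyl-sum dispersive estimate of Lemma~\ref{L:K_N estimates} together with a genuinely number-theoretic analysis of the temporal convolution against Farey-fraction kernels (divisor-counting bounds, Lemmas~\ref{L:F_2}--\ref{L:time convolution torus}, and the $\T\to\R$ transfer of Lemma~\ref{L:time convolution R}). None of this extra information about where $K_N$ is large appears in your outline, and without it the passage from an $\eps$-loss estimate to the scale-invariant one cannot be made.
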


Unlike $\R^d$, the torus $\T^d$ does not admit a true scaling symmetry; however, for very short times, the linear evolution of highly concentrated initial data will not distinguish the two.
For well-posedness questions of nonlinear problems, such concentrated solutions are the principal adversary.   Correspondingly, scale-invariant estimates are an essential tool for treating nonlinear problems at the critical regularity.

We should also note that by choosing $p$ close to $\frac{2(d+2)}{d}$ one may make the $\frac d2- \frac{d+2}p$
loss of derivatives as small as one wishes.  It is not difficult to verify that the stated estimate fails for the square torus (i.e., $\theta_1=\cdots=\theta_d=1$) if one takes $p=\frac{2(d+2)}{d}$;
see \cite{Bourgain:TorusStrichartz}.

Very recently, Bourgain and Demeter (see \cite[Theorem~2.4]{BourgainDemeter:l2}) proved analogous Strichartz estimates with an arbitrarily small loss of scaling:

\begin{theorem}[Non-scale-invariant Strichartz estimates]\label{T:epsStrichartz}
Fix $d\geq 1$, $\theta_1, \ldots, \theta_d\in (0,1]$, $1\leq N\in 2^{\Z}$, and $p\geq\frac{2(d+2)}{d}$.  Then for any $\eta>0$,
\begin{equation}\label{E:T:epsStrichartz}
\|e^{it\Delta}P_{\leq N} f\|_{L_{t,x}^p([0,1]\times\T^d)}\lesssim_\eta N^{\frac d2- \frac{d+2}p+\eta}\|f\|_{L_x^2},
\end{equation}
where $\Delta:=\theta_1 \partial_{x_1}^2 + \cdots + \theta_d \partial_{x_d}^2$.
\end{theorem}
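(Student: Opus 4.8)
The plan is to derive \eqref{E:T:epsStrichartz} from the $\ell^2$ decoupling theorem for the paraboloid; indeed \eqref{E:T:epsStrichartz} is essentially the ``discrete restriction'' reformulation of that theorem. Set $u_0:=P_{\leq N}f$. The expansion \eqref{Fseries} exhibits $e^{it\Delta}P_{\leq N}f$ as an exponential sum over $|k|\leq N$ whose frequencies $\bigl(k,\sum_{j=1}^d\theta_jk_j^2\bigr)$ lie on the graph of the positive-definite quadratic form $Q(\xi)=\sum_{j=1}^d\theta_j\xi_j^2$. First I would rescale: the substitutions $k\mapsto k/N$ and $(t,x)\mapsto(N^2t,Nx)$ turn this sum into (a discretization of) the Fourier extension operator of $\mathrm{graph}(Q)$ at scale $\delta=N^{-2}$, in which the caps of diameter $\delta^{1/2}=N^{-1}$ each carry a single rescaled frequency, while periodicity places the rescaled sum on the box $[0,N^2]\times(N\T)^d$, contained in a Euclidean ball of radius $R\sim N^2=\delta^{-1}$.

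The main input is then the Bourgain--Demeter inequality: for the extension operator $E$ of a positive-definite quadratic graph and $p_0=\tfrac{2(d+2)}{d}$ one has the $\ell^2$ decoupling bound
\[
\|Eg\|_{L^{p_0}(B_R)}\lesssim_\eta R^\eta\Bigl(\sum_\tau\|Eg_\tau\|_{L^{p_0}(w_{B_R})}^2\Bigr)^{1/2},
\]
the sum running over $\delta^{1/2}$-caps $\tau$, and for $p>p_0$ the analogous bound carries an extra factor $\delta^{-(\frac d4-\frac{d+2}{2p})}$ (a genuine power of $\delta$, still with an $\ell^2$ sum on the right). Applying this with $\delta=N^{-2}$: since each cap carries one lattice frequency, $Eg_\tau$ is a single modulated exponential, so $\|Eg_\tau\|_{L^p(w_{B_R})}$ is comparable to $|\widehat{u_0}(k_\tau)|$ times a fixed power of $|B_R|$, and the $\ell^2$ sum on the right reconstitutes $\|u_0\|_{L^2}\le\|f\|_{L^2}$. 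A routine accounting of the powers of $R=N^2$ — from the $R^\eta$, from the extra factor when $p>p_0$, from $|B_R|$, and from the Jacobians of the two rescalings — then leaves exactly the asserted exponent $\tfrac d2-\tfrac{d+2}p$ together with the loss $R^\eta=N^{2\eta}$, which one renames. The full range $p\ge p_0$ may also be reached more cheaply, by interpolating the $p_0$ estimate with the trivial bound $\|e^{it\Delta}P_{\le N}f\|_{L^\infty_{t,x}}\le\sum_{|k|\le N}|\widehat{u_0}(k)|\lesssim N^{d/2}\|f\|_{L^2}$.

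Two points deserve attention. The first is uniformity over $\theta_1,\dots,\theta_d\in(0,1]$, which the irrational tori demand: since $Q$ is a positive quadratic form, a parabolic rescaling together with a decomposition into sub-caps on which $Q$ is comparable to the standard paraboloid reduces decoupling for $\mathrm{graph}(Q)$ to decoupling for the standard paraboloid with constants depending only on $d$ — this is exactly the form in which the estimate is recorded in \cite[Theorem~2.4]{BourgainDemeter:l2}. The second, and the genuine obstacle, is the decoupling theorem itself, whose proof is the substantial content of \cite{BourgainDemeter:l2}: it runs by the Bourgain--Guth broad/narrow induction on scales, with the broad part controlled by the multilinear Kakeya (Bennett--Carbery--Tao) inequality and the narrow part fed back into the induction. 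The transference above is otherwise routine and lossless apart from the $R^\eta$, which is the origin of the $\eta$ in \eqref{E:T:epsStrichartz}; eliminating it at the slightly smaller exponents $p>p_0$ — the content of Theorem~\ref{T:Strichartz} — is not a formal consequence of \eqref{E:T:epsStrichartz} and calls for a separate argument, the positivity $\tfrac d2-\tfrac{d+2}p>0$ being the feature that at least reduces that task to a single Littlewood--Paley block via geometric summation.
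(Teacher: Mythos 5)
Your derivation is correct, but note that the paper does not prove this theorem at all: it simply quotes it from \cite[Theorem~2.4]{BourgainDemeter:l2}, and your transference argument from $\ell^2$ decoupling (rescaling to a ball of radius $N^2$, one lattice frequency per $N^{-1}$-cap, plus the interpolation with the trivial $L^\infty$ bound) is exactly the standard argument by which that cited result is established. So your proposal is essentially the same approach, with the substantial input --- the decoupling theorem itself --- cited rather than proved, just as in the paper.
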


This result will be an essential part of the proof of Theorem~\ref{T:Strichartz}.  In much earlier work, Bourgain showed that in the case of a square torus,
Theorem~\ref{T:epsStrichartz} implies Theorem~\ref{T:Strichartz}; see \cite[Proposition~3.113]{Bourgain:TorusStrichartz}.

The space-time Fourier methods used by Bourgain for the square torus are ill-suited to the case of an irrational torus.  We will be using the basic dispersive estimate
for the propagator (see Lemma~\ref{L:K_N estimates}), which pushes all the difficulty into bounding the resulting temporal convolution.  This style of argument (which is
closer to the usual Euclidean treatment) is indifferent to the rational/irrational character of the $\theta$s. In particular, the intricate astigmatism resulting from refocusing
at slightly different times in each coordinate direction can be brutishly handled by the arithmetic-geometric mean inequality.  Nonetheless, important insights employed by Bourgain in \cite{Bourgain:TorusStrichartz} do inform and suffuse our treatment of the subtle temporal convolution.

We now give a brief summary of prior work on Strichartz estimates on square and irrational tori:\\
$\bullet$\;In \cite{Bourgain:TorusStrichartz}, Bourgain considered only the square torus.  He proved Theorems~\ref{T:Strichartz} and~\ref{T:epsStrichartz} in dimensions one and two.  He also proved \eqref{E:T:Strichartz} for $p>4$ when $d=3$ and for $p\geq\frac{2(d+4)}{d}$ when $d\geq 4$.\\
$\bullet$\;The paper \cite{Bourgain:TorusIrrat} of Bourgain was the first to consider irrational tori. It considers only the case $d=3$ and proves scale-invariant
$L^p_t L^4_x$ Strichartz estimates for $p>\frac{16}3$.\\
$\bullet$\;Bourgain, \cite{Bourgain:Curved}, and Demeter, \cite{Demeter:Incidence}, gave very different proofs that \eqref{E:T:epsStrichartz} holds for $p=\frac{2(d+3)}{d}$ on all tori.\\
$\bullet$\;The paper \cite{GuoOhWang} of Guo, Oh, and Wang proves several Strichartz estimates on irrational tori.  In particular, they obtain \eqref{E:T:Strichartz}
in the following cases: $d=2$ and $p>\frac{20}{3}$, $d=3$ and $p>\frac{16}{3}$, $d=4$ and $p>4$, and lastly, $d\geq 5$ and $p=4$.  The also prove that \eqref{E:T:Strichartz}
holds for $d=3$ and $p>\frac{14}{3}$ under the additional assumption $\theta_1=\theta_2$.

As an application of Theorem~\ref{T:Strichartz} we consider the initial-value problem for the energy-critical nonlinear Schr\"odinger equation
\begin{equation}\label{NLS}
\begin{cases}
i\partial_t u + \Delta u= \pm |u|^{\frac4{d-2}}u\\
u(0)=u_0\in H^1(\T^d)
\end{cases}
\end{equation}
in spatial dimensions $d\in\{3,4\}$.  Specifically, we show the following:

\begin{theorem}[Well-posedness for the energyy-critical NLS]\label{T:EC}
Fix $d\in\{ 3,4\}$ and let $u_0\in H^1(\T^d)$.  Then there exists a time $T=T(u_0)$ and a unique solution $u\in C_t ([0, T);H^1(\T^d)) \cap X^1([0, T))$ to \eqref{NLS}.  Moreover, there exists $\eta_0=\eta_0(d)>0$ such that if $\|u_0\|_{H^1(\T^d)}\leq \eta$, then the solution $u$ is global in time.
\end{theorem}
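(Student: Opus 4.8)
The plan is to solve \eqref{NLS} by a contraction-mapping argument in the critical function spaces $X^s$, $Y^s$ of Herr--Tataru--Tzvetkov, built from the $U^2$- and $V^2$-atoms adapted to the group $e^{it\Delta}$. I would first record the structural facts these spaces supply: the free-evolution identity $\|e^{it\Delta}u_0\|_{X^s([0,T))}\approx\|u_0\|_{H^s}$; the embedding $X^0([0,T))\hookrightarrow C_t([0,T);L^2_x)$; the transference principle, which upgrades any bound $\|e^{it\Delta}f\|_{L^p_{t,x}}\lesssim\|f\|_{L^2}$ to the same bound with $\|u\|_{U^p_\Delta}$ on the right, together with $X^0\hookrightarrow U^p_\Delta$ and $X^s\hookrightarrow Y^s$ for $p>2$; and the Duhamel estimate
\[
\Bigl\|\int_0^t e^{i(t-s)\Delta}F(s)\,ds\Bigr\|_{X^s([0,T))}\ \lesssim\ \sup\Bigl\{\,\Bigl|\iint_{[0,T)\times\T^d}F\,\overline v\,\Bigr|\ :\ \|v\|_{Y^{-s}([0,T))}\le1\Bigr\}=:\|F\|_{N^s([0,T))}.
\]
Since $4/(d-2)\in\{2,4\}$ for $d\in\{3,4\}$, the nonlinearity $\pm|u|^{4/(d-2)}u=\pm u^{1+2/(d-2)}\bar u^{2/(d-2)}$ is an honest polynomial in $(u,\bar u)$ --- cubic when $d=4$, quintic when $d=3$ --- so, once the above are in hand, solving \eqref{NLS} reduces to a single multilinear estimate, namely
\[
\bigl\|\,|u|^{4/(d-2)}u\,\bigr\|_{N^1([0,T))}\ \lesssim\ \|u\|_{X^1([0,T))}^{\,1+4/(d-2)}
\]
together with its difference (Lipschitz) version.

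For the multilinear estimate I would use Theorem~\ref{T:Strichartz} in two ways. Transferring \eqref{E:T:Strichartz} via the transference principle and $X^0\hookrightarrow U^p_\Delta$ gives, for every $p>\tfrac{2(d+2)}d$,
\[
\|P_{\le N}u\|_{L^p_{t,x}([0,T)\times\T^d)}\ \lesssim\ N^{\frac d2-\frac{d+2}p}\,\|P_{\le N}u\|_{X^0([0,T))},
\]
an almost-$L^2$-critical family of Strichartz norms (the derivative loss $\tfrac d2-\tfrac{d+2}p$ being an arbitrarily small positive number as $p\downarrow\tfrac{2(d+2)}d$). Applying the same estimate to a single frequency cap of radius $R\le N$ --- which, by the Galilean invariance of $e^{it\Delta}$ and of $L^p_{t,x}$, is governed by the \emph{cap} scale $R$ rather than the distance $N$ to the origin --- yields the bilinear-type improvement that, whenever two factors of the nonlinearity are frequency-separated, the output frequency confines the larger one to a cap whose radius is of the order of the smaller frequency. (Alternatively, one may quote the bilinear consequences of the decoupling inequality underlying Theorem~\ref{T:epsStrichartz}, at the cost of a harmless $N^\eta$.) These are the only space-time inputs; in particular no lattice-point counting --- and hence no distinction between rational and irrational $\theta$'s --- enters.

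The core of the argument is then the multilinear estimate itself. I would Littlewood--Paley decompose each of the $2+4/(d-2)$ factors (the extra two being the test function $v$ and the Duhamel factor $u$), sort the interactions by the two largest input frequencies, use the convolution constraint to see that the output frequency is dominated by the largest of them, and, in each configuration, bound the space-time integral by H\"older in $L^{p}_{t,x}$: putting the two highest-frequency factors --- localized, when they are frequency-separated from the rest, to caps of the second-largest radius so that their losses are driven by that smaller scale --- into an $L^{p}$ close to the critical exponent, and the remaining factors into larger $L^{p}$ spaces chosen so that their derivative losses are outweighed by the $X^1$-weights they carry. Summing the resulting geometric series over all dyadic parameters, using the $\ell^2$-structure of $\|\cdot\|_{X^1}$ and $\|\cdot\|_{Y^{-1}}$ to absorb the boundary configurations, yields the claimed bound, and the identical bookkeeping applied to $|u|^{4/(d-2)}u-|w|^{4/(d-2)}w$ gives the Lipschitz estimate. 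I expect this step to be the main obstacle: the frequency summation for the quintic interaction when $d=3$ is delicate, and one must organize the cap decompositions so that, in every one of the finitely many interaction types, the combined derivative loss is strictly negative, leaving a summable series.

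With the multilinear estimate established, the fixed-point argument is standard. For arbitrary $u_0\in H^1(\T^d)$ I would run the contraction on the set of $u\in X^1([0,T))$ with $\|u\|_{X^1}\le 2\,\|u_0\|_{H^1}$ and $\|u\|_{Z([0,T))}\le\delta$, where $Z$ is a scaling-critical Strichartz-type norm with a finite time exponent; the point is that $\|e^{it\Delta}u_0\|_{Z([0,T))}$ is finite on $[0,1)$ (by the estimates above) and therefore tends to $0$ as $T\to0$, so $\delta$ can be achieved by taking $T=T(u_0)$ small. This produces the unique solution $u\in C_t([0,T);H^1(\T^d))\cap X^1([0,T))$, and continuous dependence follows from the Lipschitz estimate. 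When $\|u_0\|_{H^1}\le\eta_0$ for a suitably small $\eta_0=\eta_0(d)$, the $Z$-norm of $e^{it\Delta}u_0$ is already below $\delta$ on the full interval $[0,1)$, so the solution exists on $[0,1)$ with $\|u\|_{X^1([0,1))}\lesssim\|u_0\|_{H^1}$; combining this with conservation of mass and energy --- and, in the focusing case, with the fact that for small data the potential energy is negligible next to the kinetic --- gives a uniform bound $\|u(t)\|_{H^1}\lesssim\eta_0$, and iterating the local theory on consecutive unit intervals yields the global solution.
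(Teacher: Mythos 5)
Your function-space setup, the transference of Theorem~\ref{T:Strichartz} to $U^p_\Delta$ and $X^0$, and the core multilinear estimate---cap decomposition at the scale of the second-largest frequency combined with the Galilean-invariant cap Strichartz bound \eqref{UCStrichartz}---are exactly the paper's route to Lemma~\ref{L:bilinear1} and Proposition~\ref{P:nonlinearity}, and your small-data global argument (contraction on a small ball on $[0,1]$, then mass/energy conservation and iteration) also matches. The genuine gap is in the large-data local theory. The nonlinear estimate you propose to prove is \eqref{F estimate}, i.e.\ $\bigl\|\int_0^t e^{i(t-s)\Delta}F(u)\,ds\bigr\|_{X^1([0,T])}\lesssim\|u\|_{X^1([0,T])}^{\frac{d+2}{d-2}}$ with a $T$-independent constant and only $X^1$ norms on the right; none of these quantities becomes small as $T\to0$, since $\|e^{it\Delta}u_0\|_{X^1([0,T])}$ is comparable to $\|u_0\|_{H^1}$ uniformly in $T$. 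Hence on your ball $\{\|u\|_{X^1}\le 2\|u_0\|_{H^1},\ \|u\|_{Z}\le\delta\}$ the Duhamel term is only bounded by $C(2\|u_0\|_{H^1})^{\frac{d+2}{d-2}}$, which does not close for large data no matter how small $T$ is; the constraint $\|u\|_Z\le\delta$ buys nothing because the estimate you established never sees the $Z$-norm. To make your scheme work you would have to prove a refined estimate in which several of the factors are measured in $Z$ (or in an interpolation space between $Z$ and $X^1$); that refinement is the substantive content of the Ionescu--Pausader/Strunk-style local theory and is not addressed in your sketch.

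The paper closes this gap by a different, more elementary device that needs no $Z$-norm: choose $N=N(u_0)$ with $\|P_{>N}u_0\|_{H^1}\le\delta$, run the contraction on the ball carrying the extra constraint $\|u_{>N}\|_{X^1([0,T])}\le2\delta$, and split $F(u)=F_1(u)+F_2(u)$ with $F_1(u)=O\bigl(u_{>N}^2u^{\frac{6-d}{d-2}}\bigr)$ and $F_2(u)=O\bigl(u_{\le N}^{\frac4{d-2}}u\bigr)$. The piece $F_1$ is handled by the multilinear machinery and is small because it carries two factors of $u_{>N}$, while $F_2$ is estimated crudely in $L_t^1H_x^1$ via Remark~\ref{R} and Bernstein, producing a factor $TN^2$ that is rendered small by taking $T$ small depending on $A$, $\delta$, and $N$. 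Either adopt this frequency-splitting, or carry out the refined $Z$-norm estimate; as written, your large-data step does not close. (A minor further point: uniqueness in the full class $C_tH^1\cap X^1$, rather than merely in the contraction ball, requires the short additional argument of enlarging $N$ so that any hypothetical second solution also lies in a ball of the same type.)
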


In the three dimensional case, this theorem is not new.  The result was proved in \cite{HTT} for the case of the square torus, in \cite{GuoOhWang}
for the case when $\theta_1=\theta_2$, and for the fully irrational torus in \cite{Strunk}.  Here, we will combine the new estimates provided by Theorem~\ref{T:Strichartz}
with several beautiful ideas introduced in \cite{HTT} to provide a significantly simpler proof.  We use bilinear estimates rather than trilinear estimates; moreover, we do not need to exploit the temporal orthogonality of free evolutions to prove the bilinear estimate we use.

In four dimensions, Theorem~\ref{T:EC} was proved in \cite{HTT:4d}, but only in the case of a square torus.  Again, the new estimates provided by Theorem~\ref{T:Strichartz}
allow for a simpler argument.  In particular, we do not need any subtle multilinear estimates.

\subsection*{Acknowledgements}  R.~K. was supported by NSF grant DMS-1265868. M.~V. was supported by the Sloan Foundation and NSF grant DMS-1161396.  We are both
indebted to the Hausdorff Institute of Mathematics, which hosted us during our work on this project.  We are also grateful to Sebastian Herr for helpful conversations.


\subsection{Notation and useful lemmas}
Throughout this text, we will be regularly referring to the spacetime norms
\begin{equation}\label{E:qr def}
\bigl\| u \bigr\|_{L^p_tL_x^r([0,1]\times\T^d)}
    := \biggl(\int_{[0,1]} \biggl(\int_{\T^d} |u(t,x)|^r\,dx\biggr)^{p/r} \,dt\biggr)^{1/p},
\end{equation}
with obvious changes if $p$ or $r$ are infinity.

We write $X\lesssim Y$ to indicate that $X\leq C Y$ for some constant $C$, which is permitted to depend on the
ambient spatial dimension, $d$, without further comment.

Let $\phi$ be a smooth radial cutoff on $\R$ such that $\phi(x)=1$ for $|x|\leq 1$ and $\phi(x)=0$ for $|x|\geq 2$.  With $N\in 2^\N$ we define the Littlewood--Paley projections
\begin{equation}\label{E:LPd}
\begin{gathered}
\widehat{P_1 f}(k) := \widehat{f_1}(k):= \hat{f}(k)\prod_{j=1}^d\phi(k_j),
	\quad \widehat{P_{\leq N} f}(k):=\widehat{f_{\leq N}}(k):=\hat{f}(k)\prod_{j=1}^d\phi\bigl(\tfrac {k_j}N\bigr),\\
\text{and} \quad \widehat{P_Nf}(k) :=\widehat{f_N}(k):= \hat f(k)\prod_{j=1}^d\bigl[\phi\bigl(\tfrac {k_j}N\bigr)- \phi\bigl(\tfrac{2k_j}N \bigr)\bigr],
\end{gathered}
\end{equation}
where $k=(k_1, \ldots, k_d)\in \Z^d$.  Using Littlewood--Paley projectors with this product structure simplifies the proof of Theorem~\ref{T:Strichartz} slightly.

Next we recall the definition of the function spaces $U^p$ and $V^p$ and use them to construct the relevant function spaces for our applications.  The general theory of $U^p$ and $V^p$ spaces is discussed at some length in \cite{KochTatVis}; we will confine ourselves here to reviewing the definitions and basic properties in the specific setting that is relevant to our problem.  In particular, we only consider finite time intervals of the form $[0,T)$.  Let $H$ be a separable Hilbert space over $\C$; in this paper, this will be
$\C$ or $H^s(\T^d)$ with $s=0,1$.  Let $\mathcal Z$ be the set of finite partitions $0=t_0 <t_1 <\ldots<t_K \leq T$.  We use the convention that $v(T) := 0$ for all functions $v :[0,T)\to H$.

\begin{definition}
Let $1\leq p<\infty$.  An $U^p$-atom is a function $a:[0,T)\to H$ of the form
$$
a=\sum_{k=1}^K \chi_{[t_{k-1},t_k)}\phi_{k-1},
$$
where $\{t_k\}\in \mathcal Z$ and $\{\phi_k\}\subset H$ with $\sum_{k=0}^{K-1} \|\phi_k\|_H^p=1$.  The atomic space $U^p([0,T); H)$ is the space of all functions $u:[0,T)\to H$ of the form
$$
u=\sum_{j=1}^\infty \lambda_ja_j
$$
with $\{\lambda_j\}\in \ell^1(\C)$ and $a_j$ being $U^p$-atoms.  The norm on $U^p([0,T); H)$ is given by
$$
\|u\|_{U^p}:=\inf\Bigl\{ \sum_{j=1}^\infty |\lambda_j|:\ u=\sum_{j=1}^\infty \lambda_ja_j \text{ with } \{\lambda_j\}\in \ell^1(\C) \text{ and $U^p$-atoms } a_j\Bigr\}.
$$
\end{definition}

\begin{definition}
Let $1\leq p<\infty$.  The space $V^p([0,T);H)$ is the space of all functions $v:[0,T)\to H$ such that
$$
\|v\|_{V^p}:=\sup_{\{t_k\}\in \mathcal Z}\Bigl( \sum_{k=1}^K \|v(t_k)-v(t_{k-1})\|_H^p\Bigr)^{1/p}<\infty.
$$
The space $V^p_{rc}([0,T); H)$ denotes the closed subspace of all right-continuous functions $v:[0,T)\to H$ such that $v(0) = 0$.
\end{definition}

\begin{remark}
The spaces $U^p([0,T); H)$, $V^p([0,T); H)$, and $V^p_{rc}([0,T); H)$ are Banach spaces and satisfy
$$
U^p([0,T); H)\hookrightarrow V^p_{rc}([0,T); H)\hookrightarrow U^q([0,T); H)\hookrightarrow L^\infty([0,T); H)
$$
for all $1\leq p<q<\infty$.
\end{remark}

\begin{definition}
Let $s=0,1$.  Then $U_{\Delta}^p H^s$ and $V_\Delta^pH^s$ denote the spaces of all functions $u : [0,T)\to H^s(\T^d)$ such that the map $t\to e^{-it\Delta}u(t)$ is in $U^p([0,T);H^s)$ and $V^p([0,T); H^s)$, repectively, with norms given by
$$
\|u\|_{U_{\Delta}^p H^s}:= \|e^{-it\Delta}u\|_{U^p([0,T);H^s)} \qtq{and}  \|u\|_{V_{\Delta}^p H^s}:= \|e^{-it\Delta}u\|_{V^p([0,T);H^s)}.
$$
We define $X^s([0,T))$ and $Y^s([0,T))$ to be the spaces of all functions $u:[0,T)\to H^s(\T^d)$ such that for every $\xi\in \Z^d$ the map $t\to \widehat{e^{-it\Delta} u(t)}(\xi)$ is in $U^2([0,T); \C)$ and $V^2_{rc}([0,T); \C)$, respectively, with norms given by
\begin{align*}
\|u\|_{X^s([0,T))}:=\Bigl( \sum_{\xi\in \Z^d}\langle\xi\rangle^{2s}\| \widehat{e^{-it\Delta} u(t)}(\xi)\|^2_{U^2 }\Bigr)^{1/2}, \\
\|u\|_{Y^s([0,T))}:=\Bigl(\sum_{\xi\in \Z^d}\langle\xi\rangle^{2s}\| \widehat{e^{-it\Delta} u(t)}(\xi)\|^2_{V^2}\Bigr)^{1/2}.
\end{align*}
These are the same spaces used in \cite{HTT} and subsequent works.
\end{definition}

\begin{remark}\label{R}
We have the continuous embeddings $U_{\Delta}^2 H^s\hookrightarrow X^s\hookrightarrow Y^s\hookrightarrow V_{\Delta}^2 H^s$.  We also note that
$$
\|u\|_{L_t^\infty H^s_x([0,T)\times\T^d)}\lesssim \|u\|_{X^s([0,T))}
$$
and
$$
\Bigl\|\int_0^t e^{i(t-s)\Delta}F(s)\, ds\Bigr\|_{X^s([0,T))}\lesssim \|F\|_{L_t^1 H^s_x([0,T)\times\T^d)}.
$$
\end{remark}

Using the atomic structure of $U^p$ and Remark~\ref{R}, we can recast the Strichartz estimates from Theorem~\ref{T:Strichartz} as follows:
\begin{align}\label{UStrichartz}
\|P_{\leq N} u\|_{L^p([0,T)\times\T^d)}\lesssim N^{\frac d2-\frac{d+2}p}\|P_{\leq N} u\|_{U^p_\Delta L^2} \lesssim N^{\frac d2-\frac{d+2}p}\|P_{\leq N} u\|_{Y^0([0,T))}
\end{align}
for all $p>\tfrac{2(d+2)}d$ and $N\geq 1$.  In particular, due to the Galilei invariance of solutions to the linear Schr\"odinger equation,
\begin{align}\label{UCStrichartz}
\|P_Cu\|_{L^p([0,T)\times\T^d)}\lesssim N^{\frac d2-\frac{d+2}p}\|P_Cu\|_{Y^0([0,T))} \qtq{for all} p>\tfrac{2(d+2)}d
\end{align}
and for any cube $C\subset \R^d$ of side-length $N\geq 1$.

\section{Scale invariant Strichartz estimates}

The implicit constants in this section will be allowed to depend on the magnitude of $\{\theta_j\}_{j=1}^d$ and we will not be tracking that dependence.  It is worth noting however that the number theoretical properties of $\{\theta_j\}_{j=1}^d$ play no role in our arguments.

In this section, we will write
\begin{equation}\label{K_N defn}
K_N(t,x):=[e^{it\Delta} P_{\leq N} \delta_0](x) = \sum_{k\in \Z^d} \prod_{j=1}^d \phi(k_j) e^{2\pi i [ x_j k_j -  t \theta_j^{ } k_j^2]}
\end{equation}
for the convolution kernel associated to the frequency-localized propagator.  Here $1\leq N\in 2^\Z$ and $P_{\leq N}$ is the Littewood--Paley projector defined in \eqref{E:LPd}.

As observed already in \cite{Bourgain:TorusStrichartz}, the passage from Theorem~\ref{T:epsStrichartz} to Theorem~\ref{T:Strichartz} requires additional information about the action of
the convolution kernel $K_N(t,x)$ only in places where it is large.  This will become apparent when we complete the proof of Theorem~\ref{T:Strichartz} at the end of this section.

From the micro-local perspective, we expect $K_N$ to be large only near conjugate points of the geodesic flow.  For the square torus, there are many such conjugate points, one at every rational time;
however, the degree of refocusing is governed by the denominator of the rational number concerned.  These heuristics are borne out by Lemma~\ref{L:K_N estimates} below, whose statement is best understood in
the context of Dirichlet's Lemma on rational approximation.

\begin{lemma}[Dirichlet]\label{L:Dirichlet}
Given an integer $N\geq 2$ and $\beta\in[0,1]$, there exist integers $1\leq q<N$ and $0\leq a\leq q$ so that $(a,q)=1$ and
$
| \beta - \tfrac aq | \leq \tfrac{1}{Nq}.
$
\end{lemma}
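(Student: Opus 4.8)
The plan is to run Dirichlet's own pigeonhole argument and then do a small amount of bookkeeping to land on the precise ranges asserted. First I would consider the $N+1$ real numbers consisting of $1$ together with the fractional parts $\{k\beta\}$ for $k=0,1,\dots,N-1$, all lying in $[0,1]$, and partition $[0,1]$ into the $N$ intervals $[\tfrac{j}{N},\tfrac{j+1}{N})$ for $0\le j\le N-2$ together with $[\tfrac{N-1}{N},1]$. By the pigeonhole principle two of these $N+1$ numbers share an interval, and hence differ by at most $\tfrac{1}{N}$. If these are $\{k_1\beta\}$ and $\{k_2\beta\}$ with $0\le k_1<k_2\le N-1$, then setting $q=k_2-k_1$ and $p=\lfloor k_2\beta\rfloor-\lfloor k_1\beta\rfloor$ gives $|q\beta-p|=|\{k_2\beta\}-\{k_1\beta\}|\le\tfrac{1}{N}$; if instead one of them is the point $1$ and the other is $\{k\beta\}$ — necessarily with $1\le k\le N-1$, since $0$ lies outside the last interval once $N\ge 2$ — then setting $q=k$ and $p=\lfloor k\beta\rfloor+1$ gives $|q\beta-p|=1-\{k\beta\}\le\tfrac{1}{N}$. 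Either way one obtains integers $1\le q\le N-1$ and $p$ with $|\beta-\tfrac{p}{q}|\le\tfrac{1}{Nq}$.

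It then remains to arrange coprimality and the range $0\le a\le q$. For this I would reduce $\tfrac{p}{q}$ to lowest terms, writing $\tfrac{p}{q}=\tfrac{a}{q'}$ with $\gcd(a,q')=1$ and $1\le q'\le q<N$ (taking $a=0$, $q'=1$ in the case $p=0$); this only sharpens the estimate, since $|\beta-\tfrac{a}{q'}|=|\beta-\tfrac{p}{q}|\le\tfrac{1}{Nq}\le\tfrac{1}{Nq'}$. Finally, $0\le a\le q'$ follows from $\beta\in[0,1]$: if $a<0$ then $\beta-\tfrac{a}{q'}\ge\tfrac{1}{q'}$, and if $a>q'$ then $\tfrac{a}{q'}-\beta\ge\tfrac{1}{q'}$, each contradicting $|\beta-\tfrac{a}{q'}|\le\tfrac{1}{Nq'}<\tfrac{1}{q'}$, which is valid because $N\ge 2$. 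Renaming $q'$ as $q$ completes the proof.

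I do not expect any genuine difficulty: the entire mathematical content is the pigeonhole principle, and everything after it is routine normalization. The only point that needs a little care is the inclusion of the extra input point $1$ among the pigeonhole inputs, which is precisely what upgrades the classical bound $q\le N$ to the strict inequality $q<N$ that is wanted later in the paper.
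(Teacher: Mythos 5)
Your proof is correct. The paper quotes this lemma as a classical fact of Dirichlet and offers no proof of its own, so there is nothing to diverge from: your argument is the standard sharp pigeonhole proof (the inclusion of the extra point $1$ alongside $\{k\beta\}$, $0\le k\le N-1$, is exactly what yields $q\le N-1$ with the bound $\tfrac1{Nq}$), and your bookkeeping — reducing to lowest terms, taking $a=0$, $q=1$ when $p=0$ in accordance with the convention $(0,q)=q$, and using $\beta\in[0,1]$ with $N\ge2$ to force $0\le a\le q$ — is complete and accurate.
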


Recall that $(a,q)$ denotes the greatest common divisor of $a$ and $q$; correspondingly, $(a,q)=1$ asserts that $a$ and $q$ are relatively prime.  Note also that $(0,q)=q$.

\begin{lemma}[Dispersive estimate for $K_N$]\label{L:K_N estimates}
Choosing integers $0\leq a_j\leq q_j< N$ so that $(a_j,q_j)=1$ and $|\theta_jt-\frac {a_j}{q_j}|\leq\frac1{q_jN}$, we have
\begin{align*}
|K_N(t,x)|\lesssim\prod_{j=1}^d \frac{N}{\sqrt{q_j} \bigl(1+ N\bigl|\theta_jt-\frac {a_j}{q_j}\bigr|^{1/2}\bigr)}
\end{align*}
uniformly for $t\in[0,1]$.
\end{lemma}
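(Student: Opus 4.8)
The plan is to exploit the product structure of $K_N$. Since $\phi$ factorizes, $K_N(t,x)=\prod_{j=1}^d G_N(\theta_j t, x_j)$, where $G_N(s,y):=\sum_{k\in\Z}\phi(k)e^{2\pi i[yk-sk^2]}$ is the one-dimensional frequency-localized Schr\"odinger kernel on $\T$. Thus it suffices to prove the single-variable estimate
$$
|G_N(s,y)|\lesssim \frac{N}{\sqrt{q}\,\bigl(1+N|s-\tfrac aq|^{1/2}\bigr)}
$$
whenever $0\le a\le q<N$, $(a,q)=1$, and $|s-\tfrac aq|\le \tfrac1{qN}$, and then take the product over $j$. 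This is exactly the kind of Weyl-sum bound that appears in Bourgain's work; the key tool is the classical Gauss sum estimate together with a Poisson-summation/van der Corput argument for the ``tail'' phase.

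First I would write $k=qm+r$ with $r$ running over residues mod $q$ and $m\in\Z$, and split the phase $sk^2 = \tfrac aq k^2 + (s-\tfrac aq)k^2$. The contribution of $\tfrac aq k^2$ depends only on $r$ and produces a Gauss sum $\sum_{r \bmod q} e^{2\pi i a r^2/q}$, which has modulus $\lesssim \sqrt q$ by the standard evaluation of quadratic Gauss sums (here $(a,q)=1$ is exactly what is needed; note the edge case $q=1$, where there is nothing to sum). For each fixed residue $r$, what remains is $\sum_m \phi(\tfrac{qm+r}{N}) e^{2\pi i[(qm+r)y - (s-\tfrac aq)(qm+r)^2]}$, a smooth sum over $m$ with $\lesssim N/q$ effective terms and a quadratic phase of second derivative $\sim q^2|s-\tfrac aq|$. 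I would estimate this by Poisson summation in $m$ (equivalently, a stationary-phase/van der Corput bound), getting $\lesssim \min\{N/q,\ (q^2|s-\tfrac aq|)^{-1/2}\} = \frac{N}{q}\cdot\frac{1}{1+N^2|s-\tfrac aq|\,(q/N)\cdot\ldots}$; after combining with the hypothesis $|s-\tfrac aq|\le\tfrac1{qN}$ (which forces $q^2|s-\tfrac aq|\le q/N\le 1$ so the minimum is essentially $N/q$ unless $N|s-\tfrac aq|^{1/2}$ is large) one arrives at the bound $\frac{N/q}{1+N|s-\tfrac aq|^{1/2}}$ for the inner sum. Multiplying by the $\sqrt q$ from the Gauss sum and summing over the $q$ residues $r$ gives $\sqrt q\cdot q\cdot\frac{N/q}{1+N|s-\tfrac aq|^{1/2}}$ — which is too lossy by a factor $\sqrt q$, so the more careful version is to keep the $q$-many inner sums coherent: bound each inner sum by its sup and use that there are $q$ of them only after extracting the full $\sqrt q$ savings, i.e. one really gets $|G_N(s,y)|\le \sqrt q \cdot \sup_r|\text{(inner sum)}|$ via the Cauchy–Schwarz/Gauss-sum cancellation, not $q\cdot\sup_r$. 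Finally, taking the product over $j=1,\dots,d$ yields the claimed multi-dimensional bound; the uniformity in $t\in[0,1]$ is automatic since all constants depend only on $d$ (and harmlessly on the $\theta_j$, which only enter through the trivial rescaling $s=\theta_j t$).

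The main obstacle is getting the dependence on $q$ sharp: a naive triangle inequality over the $q$ residue classes loses a factor $\sqrt q$, so the argument must genuinely combine the Gauss-sum cancellation with the smooth sum over $m$ in a single step rather than estimating them separately — concretely, one should apply Poisson summation to the full sum over $k$ at once (not after splitting into residues), so that the Gauss sum and the dispersive decay from the large-$m$ oscillation appear together in each Poisson mode. A secondary technical point is handling the regime where $N|s-\tfrac aq|^{1/2}$ is large, where the relevant bound is the van der Corput / stationary-phase decay $(q^2|s-\tfrac aq|)^{-1/2}$ per residue class, and one must check this matches $\frac{N}{\sqrt q}(N|s-\tfrac aq|^{1/2})^{-1}$ after accounting for the constraint $q|s-\tfrac aq|\le 1/N$; this is a short computation once the Poisson expansion is in hand. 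I do not expect the $\phi$-smoothness to cause trouble — it only makes the Poisson tails rapidly decaying — so the heart of the matter is purely the arithmetic bookkeeping of the Gauss sum times the oscillatory integral.
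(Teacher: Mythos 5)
Your reduction to one dimension via the product structure is exactly what the paper does, but from there your route genuinely diverges from the paper's. The paper disposes of the one-dimensional bound by Weyl differencing: one estimates $|G_N(s,y)|^2$ via the linear change of variables $k\mapsto k+h$, reducing matters to bounding $\sum_{|h|\lesssim N}\min\{N,\|2sh\|^{-1}\}$ (with $\|\cdot\|$ the distance to the nearest integer) by a counting argument based on the rational approximation $s\approx\frac aq$; this is the content of Lemma~3.18 in Bourgain's torus Strichartz paper, which the authors simply cite. You instead propose the major-arc route: write $s=\frac aq+\beta$ and apply Poisson summation to the full $k$-sum, so that each Poisson mode $\ell$ carries a complete Gauss sum $\sum_{r\bmod q}e^{2\pi i(-ar^2+\ell r)/q}$, of modulus $\lesssim\sqrt q$ since $(a,q)=1$, times the oscillatory integral $\int\phi(u/N)e^{2\pi i[(y-\ell/q)u-\beta u^2]}\,du$ treated by stationary/non-stationary phase; since $q|\beta|N\le1$ only $O(1)$ modes are non-negligible and each is $\lesssim\min\{N,|\beta|^{-1/2}\}\sim N(1+N|\beta|^{1/2})^{-1}$, which yields the claimed bound. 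This works, and you correctly isolated the one real danger, namely that treating residue classes separately forfeits the $\sqrt q$ saving; note, however, that your interim assertion that ``Cauchy--Schwarz/Gauss-sum cancellation'' gives $|G_N|\le\sqrt q\,\sup_r|\text{inner sum}|$ is not a valid step by itself --- the saving only materializes after the coherent Poisson expansion you describe, and the arithmetic input then needed is the \emph{twisted} Gauss sum bound $\bigl|\sum_{r\bmod q}e^{2\pi i(-ar^2+\ell r)/q}\bigr|\lesssim\sqrt q$ uniformly in $\ell$, a standard but necessary refinement of the pure quadratic case. Comparatively, Weyl differencing is shorter and avoids both stationary phase and complete Gauss sums, which is why the paper can handle this lemma in one line; your Poisson/Gauss-sum argument is longer but displays the kernel's refocusing structure near rationals more explicitly and delivers the same sharp $q$-dependence.
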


Due to the product structure of \eqref{K_N defn}, the $d$-dimensional estimate is an immediate corollary of the one-dimensional case.  This in turn follows from an application of Weyl's method
(estimating the square modulus via a linear change of variables).  See \cite[Lemma~3.18]{Bourgain:TorusStrichartz} for further details.

Incidentally, Lemma~\ref{L:K_N estimates} shows that $K_N$ can only be very large if $\{\theta_j t\}_{j=1}^d$ can all be simultaneously well-approximated by rationals with small denominator.
Correspondingly, under a mild Diophantine condition, which holds for Lebesgue almost all $d$-tuples of parameters $\theta_j$, one may show that $K_N$ is very large only very close to $t=0$.
This leads to a much shorter proof of Theorem~\ref{T:Strichartz} for such $d$-tuples.

So far, we have been rather nebulous about what it means for $K_N(t,x)$ to be very large.  It turns out that the precise meaning depends on the exponent $p$ from Theorem~\ref{T:Strichartz} that one
is treating.  For now, we will use a parameter $0<\sigma\ll1$ that will be chosen later and say that $K_N(t,x)$ is large when $t$ belongs to
$$
\mathcal{T}:= \bigl\{ t\in[0,1]:\, q_j N^2 \bigl|\theta_jt-\tfrac {a_j}{q_j}\bigr| \leq N^{2\sigma} \text{ for some $j$, $q_j\leq N^{2\sigma}$, and $(a_j,q_j)=1$}\bigr\}.
$$
We then define
$$
\tilde K_N(t,x) := \chi_{\mathcal{T}} (t)  K_N(t,x).
$$
In view of Lemma~\ref{L:K_N estimates}, this construction guarantees that
\begin{align}\label{E:diff bound}
|K_N(t,x) - \tilde K_N(t,x)| \lesssim N^{d(1-\sigma)}.
\end{align}

The centerpiece of our analysis is the following proposition, which establishes space-time estimates for $\tilde K_N$.

\begin{proposition}[Strichartz estimates for $\tilde K_N$]\label{P:Strichartz}
Choose $2<p,r \leq \infty$ such that $\frac d2 - \frac2p-\frac dr>0$.  Then
\begin{align*}
\|\tilde K_N*F\|_{L_t^p L_x^r ([0,1]\times\T^d)}\lesssim N^{2(\frac d2-\frac 2p-\frac dr)} \|F\|_{L_t^{p'} L_x^{r'} ([0,1]\times\T^d)},
\end{align*}
provided $\sigma$ is sufficiently small {\upshape(\!}depending on $(d,p,r)$ only{\upshape)}.
\end{proposition}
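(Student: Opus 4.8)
The plan is to prove the dispersive-type bound for $\tilde K_N$ by interpolation between an $L^2 \to L^2$ estimate and an $L^1 \to L^\infty$ estimate, using the $TT^*$ structure of the convolution operator $F \mapsto \tilde K_N * F$. The $L^2_t L^2_x \to L^2_t L^2_x$ bound is automatic: since $\tilde K_N = \chi_{\mathcal T} K_N$ and $K_N$ is the kernel of $e^{it\Delta} P_{\leq N}$, which is unitary up to the Littlewood--Paley truncation, one has $\|K_N * F\|_{L^2_x} \lesssim \|F\|_{L^2_x}$ pointwise in time (after realizing $K_N * F(t) = e^{it\Delta}P_{\leq N} F(t)$ — but here the convolution is in both $t$ and $x$, so one must argue via Plancherel in the full spacetime variables or via Minkowski/Schur). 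This gives the endpoint $p=r=2$, where the exponent $\frac d2 - \frac2p - \frac dr = 0$. The other endpoint should be $L^1_t L^1_x \to L^\infty_t L^\infty_x$, where the relevant gain is $N^{2(\frac d2 - 2)} = N^{d-4}$; by Young's inequality this reduces to bounding $\|\tilde K_N\|_{L^\infty_t L^\infty_x}$, which by \eqref{E:diff bound}-type reasoning and Lemma~\ref{L:K_N estimates} is $\lesssim N^d$. But $N^d$ is much larger than $N^{d-4}$, so the crude $L^\infty$ bound is not enough — the point is that the set $\mathcal T$ where $\tilde K_N$ is supported is small, and one needs a mixed-norm bound like $\|\tilde K_N\|_{L^{q}_t L^\infty_x}$ for some finite $q$, exploiting the temporal smallness.

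Concretely, I would interpolate between (i) the trivial $L^2_{t,x} \to L^2_{t,x}$ bound with constant $\lesssim 1$, and (ii) a bound $\|\tilde K_N * F\|_{L^\infty_t L^\infty_x} \lesssim A \|F\|_{L^1_t L^1_x}$ where $A = \|\tilde K_N\|_{L^\infty_{t,x}} \lesssim N^d$ would be too lossy, so instead one refines to an estimate with a $t$-integrability improvement. The correct scheme is: estimate $\|\tilde K_N\|_{L^q_t L^\infty_x([0,1]\times\T^d)}$ for a suitable range of $q$, then use Young's inequality in time together with Hausdorff--Young/Minkowski in space: $\|\tilde K_N * F\|_{L^p_t L^r_x} \lesssim \|\tilde K_N\|_{L^a_t L^b_x} \|F\|_{L^{p'}_t L^{r'}_x}$ with the Young relations $1 + \frac1p = \frac1a + \frac1{p'}$ and $1 + \frac1r = \frac1b + \frac1{r'}$. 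Taking $b = \infty$ forces $r = r'$, i.e. $r=2$; to get general $r$ one must interpolate the $r=2$ result (obtained this way) against the $L^2_{t,x}$ bound, or else directly estimate $\|\tilde K_N\|_{L^a_t L^b_x}$ for finite $b$. So the real work is the bound on $\|\tilde K_N\|_{L^a_t L^b_x}$.

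That estimate is where the structure of $\mathcal T$ and Lemma~\ref{L:K_N estimates} enter decisively, and it is the main obstacle. On $\mathcal T$ we write, for each dyadic block in the $q_j$ and in the size of $|\theta_j t - a_j/q_j|$, the pointwise bound from Lemma~\ref{L:K_N estimates}: near a refocusing time with denominators $q_j$ and offsets $|\theta_j t - a_j/q_j| \sim \delta_j/N^2$ (with $\delta_j \le N^{2\sigma}$ on the "large" set in the $j$-th coordinate, but only required for \emph{some} $j$), $|K_N| \lesssim \prod_j \frac{N}{\sqrt{q_j}(1 + \delta_j^{1/2})}$. The measure of the set of $t$ with $|\theta_j t - a_j/q_j| \le \delta_j/(q_j N^2)$, as $a_j$ ranges over integers coprime to $q_j$, is $\lesssim \delta_j/N^2$ (roughly $q_j$ intervals each of length $\delta_j/(q_j N^2)$); and $q_j$ itself ranges over $\le N^{2\sigma}$ values. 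Here the astigmatism — the $j$-th coordinate refocuses at a slightly different time than the $k$-th — is handled, as the introduction advertises, by the arithmetic--geometric mean inequality: one bounds the $d$-fold product of "rates" by replacing genuine temporal overlap of all $d$ refocusing windows with the sum over $j$ of the contribution where the $j$-th window is the constraining one, controlling the $L^\infty_x$-in-$t$ integral by a sum of $d$ one-dimensional integrals times trivial $N$-power factors from the other coordinates. Summing the resulting geometric series in the dyadic parameters $\delta_j \in [1, N^{2\sigma}]$ and $q_j \in [1, N^{2\sigma}]$ produces only $\log N$ losses, hence $\lesssim N^{O(\sigma)}$ losses, which is why $\sigma$ must be taken small depending on how much room $\frac d2 - \frac2p - \frac dr > 0$ leaves. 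The final step is then bookkeeping: one checks that with $b$ chosen appropriately (or after interpolating the $b=\infty$, $r=2$ endpoint against $L^2_{t,x}$), the Young exponents $a, b$ are admissible, the power of $N$ produced by the $\|\tilde K_N\|_{L^a_t L^b_x}$ bound is exactly $2(\frac d2 - \frac2p - \frac dr)$ up to $N^{C\sigma}$, and the latter is absorbed by shrinking $\sigma$.
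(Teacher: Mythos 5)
Your first step --- the fixed-time bound obtained by interpolating unitarity against the dispersive estimate of Lemma~\ref{L:K_N estimates}, with the arithmetic--geometric mean inequality decoupling the $d$ coordinates and a dyadic decomposition in the denominator size $Q$ and the distance $T$ to the rationals --- is exactly the paper's Lemma~\ref{L:dispersive}, and that part is sound. The genuine gap is in how you treat the resulting temporal convolution: you propose to bound it by the size and support of the kernel alone (Young's inequality with $\|\tilde K_N\|_{L^a_tL^b_x}$, ``temporal smallness of $\mathcal T$'' plus dyadic bookkeeping). After the fixed-time step, the time kernel attached to denominators $q\sim Q$ and scale $T$ is a train of roughly $Q^2$ bumps of height $1$ and width $T$ (not $O(1)$ bumps per dyadic block, so your claim of only $\log N$ losses is already false), and any size-based estimate --- Young or a weak-type/Hardy--Littlewood--Sobolev refinement --- gives an $L^{p'}_t\to L^p_t$ operator norm of order $(Q^2T)^{2/p}$. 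Feeding this into the $Q$-sum, the contribution of the block $q\sim Q$ carries the factor $Q^{\frac dr-\frac d2+\frac 4p}$, so the sum over $Q\leq N^{2\sigma}$ is dominated by $Q=1$ only under the \emph{stronger} condition $\frac d2-\frac dr-\frac 4p>0$; under the actual hypothesis $\frac d2-\frac dr-\frac 2p>0$ one picks up an irremovable loss $N^{c\sigma}$ with $c=c(d,p,r)>0$. That loss is fatal for the purpose of the proposition: in the proof of Theorem~\ref{T:Strichartz} it is applied with exponent pair $(r,r)$ and $r$ arbitrarily close to $\frac{2(d+2)}d$, precisely where $\frac d2-\frac dr-\frac4p>0$ fails, and an $N^{c\sigma}$-lossy version would only reproduce the non-scale-invariant estimate (or the old range $p\gtrsim\frac{2(d+4)}d$), not \eqref{E:T:Strichartz}.

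What your outline is missing is the paper's central arithmetic input, Lemma~\ref{L:time convolution torus}: convolution on $\T$ with the Farey-point kernel $\mathcal F_{1,Q}*\phi_T(\tfrac\cdot T)$ maps $L^{p'}(\T)\to L^p(\T)$ with norm $\lesssim Q^{\frac2p(1+\eps)}T^{\frac2p}$, i.e.\ a gain of essentially $Q^{2/p}$ over the size-based bound $(Q^2T)^{2/p}$. This is the ``non-resonant structure of the temporal convolution kernel'' the introduction refers to, and it cannot be seen from $|\tilde K_N|$ and $|\mathcal T|$ alone: it is proved by passing to the Fourier side, where $\widehat{\mathcal F_{2,Q}}(\omega)$ is controlled by $Q\,d_Q(\omega)$ (Lemma~\ref{L:F_2}), combining this with the divisor-count distributional bound of Lemma~\ref{L:bound on divisors} and a restricted weak-type interpolation, and then transferring from $\T$ back to $[0,1]\subset\R$ with the dilations $\theta_j$ via Lemma~\ref{L:time convolution R}. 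The arithmetic--geometric mean inequality you invoke only handles the coordinate astigmatism in the spatial/dispersive step; it does not supply this temporal gain, so your argument as written cannot reach the stated range of $(p,r)$.
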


As we will see, Proposition~\ref{P:Strichartz} is a direct consequence of the next two lemmas.  The first lemma concerns mapping properties of $\tilde K_N$ as a convolution kernel on $\T^d$ (with $t$ fixed);
this will follow easily from Lemma~\ref{L:K_N estimates}.  The second lemma is much more challenging and deals with the resulting temporal convolution.  This two-step argument has strong parallels to the standard
approach in the Euclidean setting, where one uses the (much simpler) dispersive estimate and then the time convolution is handled very swiftly by an application of the Hardy--Littlewood--Sobolev inequality.
Such an approach yields only very poor estimates in the torus setting.  It is essential to exploit the non-resonant structure of the temporal convolution kernel which yields substantial gains for large $q$
relative to the Hardy--Littlewood--Sobolev inequality.

To state the first lemma, we introduce a family of smooth radial cutoffs on $\R$ as follows:
\begin{equation*}
\phi_{N^{-2}} (x):=
\begin{cases}
1, \quad \text{if } |x|\leq 1\\
0, \quad \text{if } |x|\geq 2
\end{cases}
\end{equation*}
and for all dyadic $T> N^{-2}$ we define $\phi_T(x):= \phi_{N^{-2}} (x)- \phi_{N^{-2}} (2x)$.  Exploiting just these definitions, we have
\begin{align}\label{E:covers T}
\sum_{j=1}^d \ \ \sum_{Q=1}^{N^{2\sigma}} \ \sum_{T=N^{-2}}^{N^{2\sigma-2}/Q} \sum_{\substack{(a,q)=1\\q\sim Q}} \phi_T\bigl( \tfrac{\theta_jt-\frac aq}T\bigr) \geq  1 \qtq{for all} t\in\mathcal T.
\end{align}
Here, and in all that follows, $Q$ and $T$ are restricted to lie in $2^\Z$ and $q\sim Q$ means that $Q\leq q < 2Q$.

\begin{lemma}[Dispersive estimates for $\tilde K_N$]\label{L:dispersive}
For $t\in[0,1]$ and $2\leq r\leq \infty$ we have
\begin{align*}
\|\tilde K_N(t)*f\|_{L^r(\T^d)}\lesssim  \|f\|_{L^{r'}(\T^d)}\sum_{j=1}^d \ \sum_{Q=1}^{N^{2\sigma}} \sum_{T=N^{-2}}^{N^{2\sigma-2}/Q} (QT)^{\frac dr-\frac d2}
		\sum_{\substack{(a,q)=1\\q\sim Q}} \phi_T\bigl( \tfrac{\theta_jt-\frac aq}T\bigr).
\end{align*}
\end{lemma}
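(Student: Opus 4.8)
The plan is to reduce the $L^{r'}\to L^r$ mapping bound for $\tilde K_N(t)*\,\cdot\,$ to a pointwise estimate on $\tilde K_N(t,x)$ together with interpolation. First I would record the two endpoint cases. At $r=\infty$, convolution against $\tilde K_N(t)$ maps $L^1\to L^\infty$ with norm $\|\tilde K_N(t)\|_{L^\infty_x}$, so I need a pointwise bound on the kernel. At $r=2$, convolution is bounded on $L^2$ with norm equal to the $\ell^\infty$ norm of the multiplier, i.e. $\sup_k \prod_j \phi(k_j) \lesssim 1$; but to make the two endpoints interpolate into the claimed $(QT)^{d/r-d/2}$ weight we actually want the $L^2\to L^2$ bound expressed in the same summed form, which is where \eqref{E:covers T} enters: on $\mathcal T$ the right-hand sum is $\geq 1$, so the trivial $L^2$ bound is dominated by it, and off $\mathcal T$ the kernel $\tilde K_N(t)$ vanishes so there is nothing to prove. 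Thus the real content is the $L^1\to L^\infty$ endpoint.

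For that endpoint, the key step is to feed the Dirichlet-type denominators $q_j$ from Lemma~\ref{L:K_N estimates} into the definition of $\mathcal T$ and $\tilde K_N$. By Lemma~\ref{L:K_N estimates}, for $t\in\mathcal T$ we have
\begin{align*}
|\tilde K_N(t,x)| \lesssim \prod_{j=1}^d \frac{N}{\sqrt{q_j}\,(1+N|\theta_j t - \tfrac{a_j}{q_j}|^{1/2})},
\end{align*}
and I want to see that each factor is controlled by $N^{?}$ times the $\phi_T$ sum. The idea is: if $q_j \leq N^{2\sigma}$ and $q_j N^2|\theta_j t - a_j/q_j| \leq N^{2\sigma}$, set $Q$ to be the dyadic scale of $q_j$ and $T$ the dyadic scale of $|\theta_j t - a_j/q_j|$ (capped below at $N^{-2}$), so that $QT \lesssim N^{2\sigma-2}/Q$ as required in the sum, and $\phi_T(\frac{\theta_j t - a_j/q_j}{T}) = 1$ for that choice. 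Then $q_j \sim Q$, $|\theta_j t - a_j/q_j| \sim T$ (or $\leq N^{-2}$), and one checks
\begin{align*}
\frac{N}{\sqrt{q_j}\,(1+N|\theta_j t - \tfrac{a_j}{q_j}|^{1/2})} \lesssim \frac{N}{Q^{1/2}(NT^{1/2} + 1)} \lesssim \frac{1}{Q^{1/2}T^{1/2}} = (QT)^{-1/2},
\end{align*}
using $T \geq N^{-2}$ in the last step to absorb the additive $1$. Taking the product over $j$ gives $|\tilde K_N(t,x)| \lesssim \prod_j (Q_j T_j)^{-1/2}$, and since the full nested sum in \eqref{E:covers T} is $\geq 1$ on $\mathcal T$ one can bound the single product by the sum — indeed by the $(QT)^{d/r-d/2}$-weighted sum at $r=\infty$, which is $(QT)^{-d/2}$. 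One subtlety is that the claimed bound has a single sum over $j$, not a $d$-fold product; this is handled by the elementary inequality $\prod_{j=1}^d (Q_jT_j)^{-1/2} \leq \sum_{j=1}^d (Q_jT_j)^{-d/2}$ (arithmetic–geometric mean / each factor is $\leq 1$), which collapses the product into the stated single sum.

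With the $L^1\to L^\infty$ bound $\|\tilde K_N(t)*f\|_{L^\infty} \lesssim \|f\|_{L^1} \sum_j \sum_{Q,T} (QT)^{-d/2}\sum_{q\sim Q,(a,q)=1}\phi_T(\cdots)$ and the $L^2\to L^2$ bound with the same summed right-hand side (majorizing the trivial constant $1$), Riesz–Thorin interpolation between $r=2$ and $r=\infty$ — with the summed weight treated as a fixed multiplier of $f$-independent size — yields the exponent $(QT)^{(d/r - d/2)}$ for $2\leq r\leq\infty$, which is exactly the claim. I expect the main obstacle to be the bookkeeping: verifying that the dyadic ranges of $Q$ and $T$ produced from the Dirichlet data genuinely fall inside the summation ranges $Q\leq N^{2\sigma}$, $N^{-2}\leq T\leq N^{2\sigma-2}/Q$ in all cases (in particular the borderline case $|\theta_j t - a_j/q_j| < N^{-2}$, where $T$ is pinned to $N^{-2}$), and making sure the interpolation of the summed bound is legitimate — cleanest if one first fixes $t$, views the right-hand sum as a scalar $C(t)$, interpolates $\|\tilde K_N(t)*f\|_{L^r}\lesssim C(t)\|f\|_{L^{r'}}$ trivially, and only afterward observes $C(t)$ has the stated form. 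The positivity of $\phi_T$ and the lower bound \eqref{E:covers T} are what let the $L^2$ endpoint be folded in without loss.
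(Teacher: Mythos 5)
Your ingredients (unitarity at $L^2$, the dispersive bound of Lemma~\ref{L:K_N estimates} at $L^1\to L^\infty$, AM--GM to collapse the product over $j$ into a single sum, and \eqref{E:covers T}) are exactly the right ones, but the order in which you combine them creates a genuine gap at the interpolation step. You convert the $L^1\to L^\infty$ endpoint into the summed form with weights $(QT)^{-\frac d2}$ \emph{first}, and then interpolate against the $L^2$ endpoint. But Riesz--Thorin applied to two operator bounds with constants $C_\infty(t)$ and $C_2(t)$ only yields the geometric mean $C_\infty(t)^{1-\frac2r}C_2(t)^{\frac2r}$: writing $a_i$ for the quantities $\phi_T\bigl(\tfrac{\theta_jt-\frac aq}{T}\bigr)$ and $w_i=(QT)^{-\frac d2}$, interpolation gives $\bigl(\sum_i a_iw_i\bigr)^{1-\frac2r}\bigl(\sum_i a_i\bigr)^{\frac2r}$ (or $\bigl(\sum_i a_iw_i\bigr)^{1-\frac2r}$ if you use the constant $1$ at $L^2$), whereas the lemma asserts the bound $\sum_i a_iw_i^{1-\frac2r}$. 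By H\"older, $\sum_i a_iw_i^{1-\frac2r}\le\bigl(\sum_i a_iw_i\bigr)^{1-\frac2r}\bigl(\sum_i a_i\bigr)^{\frac2r}$, i.e.\ the quantity interpolation hands you \emph{dominates} the one you want; the reverse implication you invoke ("interpolation yields the exponent $(QT)^{\frac dr-\frac d2}$ inside the sum") is false as a general inequality (one term with $a_i$ tiny and $w_i$ huge plus one term with $a_i=1$, $w_i=1$ gives a counterexample). It can be rescued here only by re-invoking the special structure (boundedly many nonzero terms for each fixed $t$, the telescoping of the $\phi_T$'s, and a coordinate witnessing $t\in\mathcal T$), which amounts to redoing the proof. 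The clean fix, and the paper's route, is to reverse the order: interpolate unitarity against the product dispersive bound of Lemma~\ref{L:K_N estimates} to get the factor $\prod_{j}\bigl(N^{-2}q_j\bigl(1+N^2\bigl|\theta_jt-\tfrac{a_j}{q_j}\bigr|\bigr)\bigr)^{\frac1r-\frac12}$, \emph{then} apply AM--GM to turn this product into $\sum_j\bigl(N^{-2}q_j(1+N^2|\cdot|)\bigr)^{\frac dr-\frac d2}$, and only at the end compare with the $\phi_T$ sums via \eqref{E:covers T} for $t\in\mathcal T$.

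Two smaller points in the same step. First, $\mathcal T$ only requires the good approximation for \emph{some} $j$; for a coordinate $j$ failing it, the $\phi_T$-sum attached to that $j$ may vanish identically while the corresponding dispersive factor is merely small ($\lesssim N^{1-\sigma}$), so a factor-by-factor comparison of the product against the per-$j$ sums cannot work. One must instead note that each such term after AM--GM is at most $(N^{2\sigma-2})^{\frac dr-\frac d2}$ and is therefore absorbed by the contribution of the witnessing coordinate $j_0$, whose weighted $\phi_T$-sum is at least that size by \eqref{E:covers T}. Second, your assertion that $\phi_T\bigl(\tfrac{\theta_jt-a_j/q_j}{T}\bigr)=1$ at the selected dyadic scale is not quite right: individual $\phi_T$'s can be arbitrarily small near the edges of their supports; it is the telescoped sum over $T$ up to the scale of $\max\{|\theta_jt-a_j/q_j|,N^{-2}\}$ — precisely the content of \eqref{E:covers T} — that equals $1$, and the comparability of weights at adjacent scales is what then gives the desired lower bound. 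These two points are fixable bookkeeping, but together with the interpolation-order issue the argument as written does not go through.
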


\begin{proof}
By the unitarity of the propagator $e^{it\Delta}$, we have
\begin{align*}
\|K_N(t)*f\|_{L^2(\T^d)}= \|f\|_{L^2(\T^d)}.
\end{align*}
On the other hand, from the kernel estimates of Lemma~\ref{L:K_N estimates}, we obtain
\begin{align*}
\|K_N(t)*f\|_{L^\infty(\T^d)}\lesssim \|f\|_{L^1(\T^d)} \prod_{j=1}^d \frac{N}{\sqrt{q_j} \bigl(1+ N\bigl|\theta_jt-\frac {a_j}{q_j}\bigr|^{1/2}\bigr)},
\end{align*}
where $0\leq a_j\leq q_j< N$ obey $(a_j,q_j)=1$ and $|\theta_jt-\frac {a_j}{q_j}|\leq\frac1{q_jN}$.

Interpolating between these two bounds and using the arithmetic--geometric mean inequality, we derive that for any $2\leq r\leq \infty$,
\begin{align*}
\|K_N(t)*f\|_{L^r(\T^d)}
&\lesssim \|f\|_{L^{r'}(\T^d)}\prod_{j=1}^d \Biggl(\frac{N}{\sqrt{q_j} \bigl(1+ N\bigl|\theta_jt-\frac {a_j}{q_j}\bigr|^{1/2}\bigr)}\Biggr)^{1-\frac2r}\\
&\lesssim \|f\|_{L^{r'}(\T^d)} \sum_{j=1}^d \Bigl(N^{-2} q_j \bigl(1+ N^2\bigl|\theta_jt-\tfrac{a_j}{q_j}\bigr| \bigr) \Bigr)^{\frac dr-\frac d2}.
\end{align*}
The lemma now follows easily from \eqref{E:covers T}.
\end{proof}

To continue, for fixed $Q$ we define
\begin{align*}
\mathcal F_{1,Q}(t):= \sum_{\substack{(a,q)=1\\q\sim Q}} \delta \bigl(t-\tfrac aq\bigr) \qtq{and}
	\mathcal F_{2,Q}(t):= \sum_{\substack{0\leq a<q\\q\sim Q}} \delta \bigl(t-\tfrac aq\bigr).
\end{align*}
Note that we may write
\begin{align*}
\sum_{\substack{(a,q)=1\\q\sim Q}} \phi_T\bigl( \tfrac{\theta_j t-\frac aq}T\bigr) = \bigl[\mathcal F_{1,Q} * \phi_T \bigl(\tfrac{\cdot}T\bigr)\bigr](\theta_jt)
\end{align*}
and so, by Lemma~\ref{L:dispersive},
\begin{align*}
&\|\tilde K_N(t)*F\|_{L_t^pL_x^r([0,1]\times\T^d)}\\
&\lesssim  \sum_{j=1}^d \ \sum_{Q=1}^{N^{2\sigma}}\ \sum_{T=N^{-2}}^{N^{2\sigma-2}/Q}\  (QT)^{\frac dr -\frac d2}
		\Bigl\|\bigl[\mathcal F_{1,Q} * \phi_T \bigl(\tfrac{\cdot}T\bigr)\bigr](\theta_jt) *\|F(t)\|_{L^{r'}(\T^d)}\Bigr\|_{L_t^p([0,1])},
\end{align*}
for any $2\leq p,r\leq \infty$.

To prove Proposition~\ref{P:Strichartz}, we need to estimate the time convolution in the expression above.  We are going to do this in two steps.  First, we bound convolution with $\mathcal F_{1,Q}*\phi_T(\cdot/T)$ as
an operator on the torus $\T$; in particular, functions will be understood to be periodic in time.  Later, we will reintroduce $\theta_j$ and pass to the requisite convolution on the subset $[0,1]$ of the real line.
We now turn to the first part of this program.

\begin{lemma}\label{L:time convolution torus}
Fix $2<p\leq \infty$.  Then for any $\sigma<\min\{\frac12,1-\frac2p\}$,
\begin{align*}
\Bigl\|\mathcal F_{1,Q} * \phi_T\bigl( \tfrac {\cdot}T\bigr) * f\Bigr\|_{L^p(\T)}\lesssim Q^{\frac2p(1+\eps)}T^{\frac2p} \|f\|_{L^{p'}(\T)}
	\qtq{with} \eps= \tfrac{\sigma(3-2\sigma)}{(1-\sigma)(1-2\sigma)},
\end{align*}
uniformly for  $1\leq Q\leq N^{2\sigma}$ and $N^{-2}\leq T\leq N^{2\sigma-2}/Q$.
\end{lemma}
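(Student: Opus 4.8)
The plan is to interpolate between an $L^2$ estimate and an $L^\infty$ estimate for the convolution operator $f \mapsto \mathcal F_{1,Q} * \phi_T(\cdot/T) * f$ on $\T$, but neither endpoint alone gives the gain in $Q$; the point is rather to run $TT^*$ and reduce everything to understanding the self-convolution $\mathcal F_{1,Q} * \widetilde{\mathcal F_{1,Q}}$ (where $\widetilde g(t) = \overline{g(-t)}$), i.e.\ to counting solutions of $\tfrac{a}{q} - \tfrac{a'}{q'} \approx t$ with $q,q' \sim Q$ and $(a,q)=(a',q')=1$. First I would observe that $\mathcal F_{1,Q} * \phi_T(\cdot/T)$ is (up to the smoothing at scale $T$) a sum of roughly $Q^2$ Dirac masses at Farey fractions with denominators in $[Q,2Q)$; since consecutive Farey fractions of denominator $\leq 2Q$ are separated by $\gtrsim Q^{-2}$ and $T \leq N^{2\sigma-2}/Q \ll Q^{-2}$ (using $Q \leq N^{2\sigma}$ and $\sigma$ small), the smoothed masses have essentially disjoint supports. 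This already yields the trivial bounds $\|\mathcal F_{1,Q}*\phi_T(\cdot/T)\|_{L^1(\T)} \lesssim Q^2 T \cdot T^{-1} = Q^2$ on the mass and $\lesssim T^{-1}$ on the sup of each bump, which by Young give an $L^{p'} \to L^p$ bound of the wrong shape; the gain must come from arithmetic cancellation.

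The key step is a second-moment / large-sieve type estimate. The crucial arithmetic input is the following count: for an interval $I$ of length $|I|$, the number of pairs of distinct Farey fractions $a/q$, $a'/q'$ with $q,q' \sim Q$ lying within $|I|$ of each other is $\lesssim Q^{3+\eps}|I|$ when $|I| \geq Q^{-2}$, which is sharp up to the $Q^\eps$ coming from the divisor bound (each gap of a given size $\sim m/(qq')$ corresponds to a bounded number of $(a,a')$, and summing $\sum_{q,q'\sim Q} \#\{1 \le m \lesssim Q^2|I| : \ldots\}$ produces the $Q^{3+\eps}$). Equivalently, $\|\mathcal F_{1,Q} * \widetilde{\mathcal F_{1,Q}} * \psi_{|I|}\|_{L^\infty} \lesssim Q^{3+\eps}$ where $\psi_{|I|}$ is an $L^1$-normalized bump at scale $|I| \gtrsim Q^{-2}$; this is where Dirichlet's Lemma and the Farey-spacing heuristic are made quantitative, and I expect \emph{this} to be the main obstacle — getting the exponent of $Q$ exactly $3$ (not $4$) together with the precise $\eps = \tfrac{\sigma(3-2\sigma)}{(1-\sigma)(1-2\sigma)}$, which presumably arises by optimizing an interpolation parameter against the constraint $Q \leq N^{2\sigma}$, $T \geq N^{-2}$ rather than being a clean number-theoretic quantity.

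Granting the arithmetic bound, I would finish as follows. By $TT^*$ duality, $\|\mathcal F_{1,Q}*\phi_T(\cdot/T)*f\|_{L^p}^2 \lesssim \|f\|_{L^{p'}}^2 \cdot \|\mathcal F_{1,Q}*\phi_T(\cdot/T) * \widetilde{\mathcal F_{1,Q}*\phi_T(\cdot/T)}\|_{L^{p/2} \to L^{p/2}}$, and the latter kernel is $\mathcal F_{1,Q}*\widetilde{\mathcal F_{1,Q}}$ smoothed at scale $T$. For the $L^1 \to L^1$ (equivalently $L^\infty \to L^\infty$) norm of convolution with this kernel we use the $L^\infty$ bound $Q^{3+\eps}$ at scale $\max(T, Q^{-2}) = Q^{-2}$ just described, together with the total-mass bound $Q^2$; more precisely, convolution with a bounded (by $Q^{3+\eps}$) function supported in $\T$ has $L^{p/2}\to L^{p/2}$ norm controlled by its $L^1$ norm, but here we want to interpolate the $L^\infty\to L^\infty$ bound (governed by $L^1$-mass of the kernel $\sim Q^2$) against the $L^2\to L^2$ bound, which by Plancherel is the sup of the Fourier coefficients of $\mathcal F_{1,Q}*\widetilde{\mathcal F_{1,Q}}$ — and a further application of the large sieve (the dual form, Montgomery–Vaughan) bounds those by $\lesssim Q^{2} \cdot (\text{something}) + Q^{3}\cdot$(high-frequency). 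Interpolating the $L^1$ endpoint against the $L^2$ endpoint at the exponent $p/2$ and tracking the powers of $Q$ and $T$ produces $Q^{\frac{2}{p}(1+\eps)}T^{\frac{2}{p}}$ after taking the square root, which is the claimed bound; the hypothesis $\sigma < \min\{\tfrac12, 1-\tfrac2p\}$ is exactly what is needed to keep $\eps$ finite (the $1-\sigma$ and $1-2\sigma$ denominators) and to ensure $T \leq N^{2\sigma-2}/Q$ is compatible with $T \geq N^{-2}$ and $Q \geq 1$, i.e.\ that the parameter range is nonempty and the smoothing scale stays below the Farey spacing.
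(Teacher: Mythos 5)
There is a genuine gap, and it sits at the two load-bearing points of your outline. First, the $TT^*$ reduction as you state it is not valid for $p\neq 2$: the inequality $\|K*f\|_{L^p}^2\lesssim\|f\|_{L^{p'}}^2\,\|(K*\widetilde K)*\cdot\|_{L^{p/2}\to L^{p/2}}$ has no justification, because $\|K*f\|_{L^p}^2$ is not the pairing $\langle(\widetilde K*K)*f,f\rangle$ unless $p=2$; the $TT^*$ identity requires a Hilbert space on one side. Second, your ``crucial arithmetic input'' is false. Writing $\frac aq-\frac{a'}{q'}=\frac m{qq'}$ with $0<|m|\le qq'|I|$, each admissible $m$ (i.e.\ $\gcd(q,q')\mid m$) is attained by $\gcd(q,q')$ pairs $(a,a')$, so the number of ordered pairs of distinct fractions with $q,q'\sim Q$ lying within $|I|$ of each other is $\asymp Q^4|I|+O(Q^2\log Q)$, not $\lesssim Q^{3+\eps}|I|$ (already at $|I|=1$ the total number of pairs is $\sim Q^4$). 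Thus there is no factor-of-$Q$ gain available at the level of the smoothed autocorrelation in $L^\infty$, and your scheme collapses back to the trivial mass/Young bound. Relatedly, any argument whose second endpoint is a genuine $L^2\to L^2$ bound for the full positive kernel cannot work: that operator norm is the supremum of the Fourier coefficients, attained at $\omega=0$, and equals the total mass $\sim Q^2T$ — consistent with the lemma failing at $p=2$ and with the hypothesis $\sigma<1-\frac2p$. (Also, minor normalization slips: the kernel's $L^1$ mass is $\sim Q^2T$, not $Q^2$, and each bump has height $1$, not $T^{-1}$.)

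For comparison, the paper's gain is arithmetic but lives on the Fourier side, and the zero mode is neutralized by a restricted weak-type formulation rather than by mean subtraction. One majorizes $\mathcal F_{1,Q}$ by $\mathcal F_{2,Q}$, computes $\widehat{\mathcal F_{2,Q}}(\omega)=\sum_{q\sim Q}q\,\chi_{\{q\mid\omega\}}\lesssim Q\,d_Q(\omega)$, and splits frequencies into ``good'' ($|\widehat{\mathcal F_{2,Q}}(\omega)|\le Q^{1+\delta}A$) and ``bad''; the good part is handled by Cauchy--Schwarz with $|E|^{1/2}|F|^{1/2}$, the bad part by an $\ell^2$ bound using the divisor distribution estimate (Lemma~\ref{L:bound on divisors}) paired with $|E|^{3/4}|F|^{3/4}$, and the threshold $A$ is chosen in terms of $|E|,|F|,T$ so that the mass term $T^2Q^4$ is dominated. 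This yields a restricted weak-type $(r_0',r_0)$ bound with $r_0=2/(1-\sigma)$, which, interpolated against the $L^1\to L^\infty$ bound (your Farey-spacing observation — the one part of your proposal that matches the paper and is correct), gives exactly $Q^{\frac2p(1+\eps)}T^{\frac2p}$ with the stated $\eps$; the constraints $\sigma<\frac12$ and $\sigma<1-\frac2p$ are precisely what make the spacing argument and the interpolation ($r_0<p$) possible. You would need to replace both your duality step and your counting claim by something of this type; as written, the proposed route cannot produce the claimed power of $Q$.
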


As the convolution kernel $\mathcal F_{1,Q} * \phi_T$ is positive, we may bound the norm by replacing $\mathcal F_{1,Q}$ by $\mathcal F_{2,Q}$.  The advantage of doing so is that the Fourier transform
of $\mathcal F_{2,Q}$ is more easily and more efficiently estimated than that of $\mathcal F_{1,Q}$; one should compare what follows with \cite[Lemma~3.33]{Bourgain:TorusStrichartz}.

\begin{lemma}[Fourier transform of $\mathcal F_2$]\label{L:F_2}
Let $d_Q(n)$ denote the number of divisors $q$ of $n$ that obey $q\sim Q$.  Then
\begin{align}
\bigl|\widehat{\mathcal F_{2,Q}}(\omega)\bigr|\lesssim Q d_Q(\omega) \qtq{for all} \omega\neq 0
\end{align}
and clearly,
\begin{align*}
\bigl|\widehat{\mathcal F_{2,Q}}(\omega)\bigr|\lesssim Q^2 \qtq{for all} \omega\in \Z.
\end{align*}
\end{lemma}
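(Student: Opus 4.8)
The plan is to evaluate $\widehat{\mathcal F_{2,Q}}$ directly; the whole point of passing to $\mathcal F_{2,Q}$ from $\mathcal F_{1,Q}$ is that summing over \emph{all} residues $0\le a<q$ — not just the reduced ones — turns the relevant exponential sum into a complete geometric sum, which orthogonality of additive characters evaluates exactly.

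First I would write out the Fourier coefficients. Since $\mathcal F_{2,Q}$ is a finite sum of Dirac masses on $\T$,
$$
\widehat{\mathcal F_{2,Q}}(\omega)=\sum_{q\sim Q}\ \sum_{a=0}^{q-1} e^{-2\pi i \omega a/q}.
$$
For fixed $q$ the inner sum is geometric with ratio $e^{-2\pi i\omega/q}$, hence equals $q$ when $q\mid\omega$ and $0$ otherwise (this is just orthogonality of the additive characters of $\Z/q\Z$). So the identity to record is
$$
\widehat{\mathcal F_{2,Q}}(\omega)=\sum_{\substack{q\sim Q\\ q\mid\omega}} q.
$$

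From here both inequalities drop out. For $\omega\neq 0$ the number of $q$ with $q\sim Q$ and $q\mid\omega$ is by definition $d_Q(\omega)$, and each such $q$ is $<2Q$; this gives $|\widehat{\mathcal F_{2,Q}}(\omega)|< 2Q\,d_Q(\omega)$. For arbitrary $\omega\in\Z$ — in particular $\omega=0$, the only case not covered above, where every $q$ divides $\omega$ — there are at most $Q$ integers $q$ in $[Q,2Q)$, each contributing at most $2Q$, so $|\widehat{\mathcal F_{2,Q}}(\omega)|<2Q^2$.

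There is no real obstacle here: the computation is a couple of lines. The one point worth emphasizing is \emph{why} it suffices. In Lemma~\ref{L:time convolution torus} one first replaces $\mathcal F_{1,Q}$ by $\mathcal F_{2,Q}$, which is legitimate because the kernel $\mathcal F_{1,Q}*\phi_T(\cdot/T)$ is nonnegative and pointwise dominated by $\mathcal F_{2,Q}*\phi_T(\cdot/T)$; this swap is precisely what lets us avoid the Ramanujan-type sum $\sum_{(a,q)=1}e^{-2\pi i\omega a/q}$, whose evaluation would require the Möbius function and an extra divisor-sum estimate. Retaining the truncated divisor count $d_Q(\omega)$ rather than the full $d(\omega)$ is what will make the subsequent $L^p(\T)$ estimate efficient.
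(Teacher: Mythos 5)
Your proof is correct and follows essentially the same route as the paper: both evaluate $\widehat{\mathcal F_{2,Q}}(\omega)$ exactly via orthogonality of the additive characters, obtaining $\sum_{q\sim Q,\ q\mid\omega} q$, from which the bounds $\lesssim Q\,d_Q(\omega)$ for $\omega\neq 0$ and $\lesssim Q^2$ in general are immediate. The sign convention in the exponential is immaterial, and your closing remarks about why the passage from $\mathcal F_{1,Q}$ to $\mathcal F_{2,Q}$ is advantageous match the paper's own discussion.
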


\begin{proof}
Recall that $\sum_{a=0}^{q-1} e^{2\pi i a \omega/q} = q$ if $q$ divides $\omega$, but vanishes otherwise.  Thus,
\begin{align*}
\widehat{\mathcal F_{2,Q}}(\omega)= \sum_{q\sim Q} \sum_{a=0}^{q-1} e^{2\pi i a \omega /q} = \sum_{q\sim Q} q \chi_{\{q \,|\, \omega\}}
\end{align*}
and the claims immediately follow.
\end{proof}

The proof of Lemma~\ref{L:time convolution torus}, will also rely on a distributional estimate for $d_Q(n)$.  The bound we need can be found in Lemma~4.28 of \cite{Bourgain:Lambda(p)}; for
completeness, we will recapitulate the proof here (with minor modifications).

\begin{lemma}\label{L:bound on divisors}
For any $\alpha, \tau>0$ we have
\begin{align*}
\#\{1\leq n\leq R:\, d_Q(n)>D\}\lesssim_{\tau, \alpha} D^{-2\alpha} Q^{2\tau} R.
\end{align*}
\end{lemma}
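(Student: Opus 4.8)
The plan is to reduce the distributional estimate to a moment bound for $d_Q(n)$, which in turn follows from a standard divisor-counting argument. First I would observe that by Chebyshev's inequality,
$$
\#\{1\leq n\leq R:\, d_Q(n)>D\} \leq D^{-2\alpha}\sum_{n=1}^R d_Q(n)^{2\alpha},
$$
so it suffices to prove $\sum_{n=1}^R d_Q(n)^{2\alpha}\lesssim_{\tau,\alpha} Q^{2\tau}R$ for every $\alpha,\tau>0$. Since $d_Q(n)\leq d(n)$, the full divisor function, and since it is a classical fact that $d(n)\ll_\eps n^\eps$, one might be tempted to simply bound $d_Q(n)^{2\alpha}\ll_\eps n^\eps$; but that loses a factor $R^\eps$ rather than the needed $Q^{2\tau}$, so more care is required. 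The point is that a divisor $q$ of $n$ with $q\sim Q$ forces $n$ to have a divisor in a short dyadic range, which is a much stronger constraint than merely having many divisors.

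The key step is therefore to bound the moment directly by expanding. For a positive integer $m$ (which I will take to be $m=\lceil 2\alpha\rceil$, so that $d_Q(n)^{2\alpha}\leq d_Q(n)^m + 1$ since $d_Q(n)$ is a nonnegative integer), write
$$
\sum_{n=1}^R d_Q(n)^m = \sum_{n=1}^R \#\{(q_1,\dots,q_m):\, q_i\sim Q,\ q_i\mid n\}^{\phantom{m}}
= \sum_{\substack{q_1,\dots,q_m\sim Q}} \#\{1\leq n\leq R:\, \lcm(q_1,\dots,q_m)\mid n\}.
$$
The inner count is at most $R/\lcm(q_1,\dots,q_m)$, which is $\leq R$, but crucially the number of $m$-tuples $(q_1,\dots,q_m)$ with all $q_i\sim Q$ sharing a common multiple up to $R$ can be organized by the value $\ell=\lcm(q_1,\dots,q_m)$. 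Since each $q_i\sim Q$ and $q_i\mid\ell$, we have $Q^m\leq\ell^m$ trivially but also $\ell\leq\prod q_i < (2Q)^m$, so $\ell$ ranges over at most $(2Q)^m$ values; more efficiently, for each admissible $\ell\leq(2Q)^m$ the number of tuples is at most $d(\ell)^m\leq d(\ell)^m$. Summing $R/\ell$ over $\ell$ in a dyadic range $\ell\sim L$ with $Q\leq L\leq (2Q)^m$ and using $\sum_{\ell\sim L}d(\ell)^m\lesssim_m L(\log L)^{C_m}$, the total is $\lesssim_m R(\log Q)^{C_m}\cdot(\text{number of dyadic ranges})\lesssim_m R(\log Q)^{C_m}$. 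Absorbing the logarithmic factor into $Q^{2\tau}$ (valid once $Q$ is large, and trivial for bounded $Q$) yields the claim.

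The main obstacle is the bookkeeping in the tuple count: one must verify that organizing the $q_i$ by their least common multiple $\ell$ and counting with multiplicity $d(\ell)^m$ genuinely produces only logarithmic losses in $Q$ rather than losses in $R$. The cleanest route is to insert the $\lcm$ constraint, bound the number of $n\le R$ divisible by $\ell$ by $R/\ell$, and then note that $\sum_{\ell}d(\ell)^m/\ell$ restricted to $\ell\le (2Q)^m$ converges up to $(\log Q)^{O(1)}$ by the standard estimate $\sum_{\ell\le x}d(\ell)^m \ll_m x(\log x)^{2^m-1}$. Since $\tau$ is arbitrary and the logarithm is subpolynomial, $(\log Q)^{O_m(1)}\lesssim_{\tau,m}Q^{2\tau}$, completing the argument; the final substitution $m=\lceil 2\alpha\rceil$ then gives the stated dependence on $\alpha$.
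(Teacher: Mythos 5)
Your proposal is correct and follows essentially the same route as the paper: Chebyshev's inequality with an integer moment $m\geq 2\alpha$, expansion of $d_Q(n)^m$ into tuples $(q_1,\dots,q_m)$ with $q_i\sim Q$, grouping by $\ell=\lcm(q_1,\dots,q_m)$ with the tuple count bounded by $d(\ell)^m$, and summing $R/\ell$ over $\ell\leq(2Q)^m$. The only cosmetic difference is the final step, where you control $\sum_{\ell} d(\ell)^m/\ell$ via the average bound $\sum_{\ell\leq x}d(\ell)^m\ll_m x(\log x)^{O_m(1)}$ and absorb the logarithm into $Q^{2\tau}$, whereas the paper uses the pointwise subpolynomial bound $d(\ell)\lesssim_\eps \ell^{\eps}$ and chooses $\eps$ small; both are standard and equally valid.
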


\begin{proof}
It suffices to treat the case where $2\alpha=:k$ is an integer.

Observe first that for fixed $q_1,\ldots,q_k$ we have
\begin{align*}
\#\{1\leq n\leq R:\, q_j | n \text{ for all $1\leq j \leq k$}\} &= \#\{1\leq n\leq R:\, \lcm(q_1,\ldots,q_k) | n \} \\
&\leq R / \lcm(q_1,\ldots,q_k).
\end{align*}
On the other hand, by the trivial sub-polynomial bound (see \cite[Theorem~315]{HardyWright}) on the total number of divisors function $d(\cdot)$, we have
$$
\# \{ (q_1,\ldots,q_k) : \lcm(q_1,\ldots,q_k) = \ell \} \leq d(\ell)^k \lesssim_\eps \ell^{k\eps},
$$
for any $\eps>0$.  Correspondingly, by Chebyshev's inequality,
\begin{align*}
\#\{1\leq n\leq R:\, d_Q(n)>D\}
&\lesssim D^{-k}\sum_{n=1}^R\Bigl(\sum_{q\sim Q}\chi_{q\Z}(n) \Bigr)^k\\
&\lesssim D^{-k} \sum_{q_1, \ldots q_k\sim Q}\frac R{\lcm(q_1,\ldots,q_k)}\\
&\lesssim_\eps D^{-k}  \sum_{\ell=1}^{(2Q)^k}  \tfrac{R}{\ell} \ell^{k\eps} \lesssim_{\eps}  D^{-k}  R Q^{\eps k}
\end{align*}
for any $\eps>0$.  The lemma now follows by choosing $\eps<2\tau/k$.
\end{proof}

We now have all the ingredients we need to complete the proof of Lemma~\ref{L:time convolution torus}.

\begin{proof}[Proof of Lemma~\ref{L:time convolution torus}]
We first note that for distinct pairs $(a_1,q_1)$ and $(a_2,q_2)$ such that $(a_1,q_1)=1=(a_2,q_2)$ and $q_1\sim Q\sim q_2$ we have
$$
\bigl|\tfrac{a_1}{q_1}-\tfrac{a_2}{q_2} \bigr|\gtrsim \tfrac1{Q^2}\gg T,
$$
because $\sigma<\frac12$.  Thus
\begin{align*}
\bigl\| \mathcal F_{1,Q} * \phi_T \bigl( \tfrac {\cdot}T\bigr) \bigr\|_{L^\infty(\T)} \leq 1,
\end{align*}
and so
\begin{align}\label{L1Linfty bound}
\Bigl\|\mathcal F_{1,Q} * \phi_T\bigl( \tfrac {\cdot}T\bigr) * f\Bigr\|_{L^\infty(\T)}\lesssim \|f\|_{L^1(\T)}.
\end{align}

Next we will prove a restricted weak type $(r_0' ,r_0)$ estimate for suitable $r_0\in(2,4)$.  The lemma will follow by interpolating between this bound and \eqref{L1Linfty bound}.

Fix $r_0>2$ and take $E,F\subseteq \T$.   Majorizing $\mathcal F_{1,Q}$ by $\mathcal F_{2,Q}$ and employing the Plancherel identity and Young's convolution inequality, we obtain
\begin{align}\label{restricted 1}
\langle \chi_E, \mathcal F_{1,Q} * \phi_T\bigl( \tfrac {\cdot}T\bigr) * \chi_F\rangle
&\lesssim |E|^{\frac12}|F|^{\frac12}\bigl\|\widehat{\mathcal F_{2,Q}}(\omega)T\widehat{\phi_T}(T\omega)\bigr\|_{\ell_\omega^\infty(\omega \text{ good})}\notag\\
&\quad + |E|^{\frac34}|F|^{\frac34}\bigl\|\widehat{\mathcal F_{2,Q}}(\omega)T\widehat{\phi_T}(T\omega)\bigr\|_{\ell_\omega^2(\omega \text{ bad})},
\end{align}
where we declare
\begin{align*}
\text{$\omega\in\Z$ is \emph{good} if and only if $\bigl|\widehat{\mathcal F_{2,Q}}(\omega)\bigr|\leq Q^{1+\delta}A$}
\end{align*}
for some small $\delta>0$ and some $A>0$ to be chosen later.  By definition,
\begin{align}\label{good freq}
\bigl\|\widehat{\mathcal F_{2,Q}}(\omega)T\widehat{\phi_T}(T\omega)\bigr\|_{\ell_\omega^\infty(\omega \text{ good})}\lesssim Q^{1+\delta} A T.
\end{align}

We now turn to estimating the `bad' frequencies.  By Lemma~\ref{L:F_2}, for a `bad' frequency $\omega\neq 0$ we must have $d_Q(\omega)\gtrsim AQ^{\delta}$.  Therefore, using the fact that $\widehat{\phi_T}$ has rapid decay
uniformly in $T$ and Lemma~\ref{L:bound on divisors}, we obtain
\begin{align}
\bigl\|\widehat{\mathcal F_{2,Q}}(\omega)T\widehat{\phi_T}(T\omega)\bigr\|_{\ell_\omega^2(\omega \text{ bad})}^2
&\lesssim T^2Q^4 + \sum_{2^\Z\ni R \geq T^{-1}} \ \sum_{\substack{0<|\omega|\leq R\\ \omega\text{ bad}}} \bigl|\widehat{\mathcal F_{2,Q}}(\omega)\bigr|^2T^2(RT)^{-100}\notag\\
&\lesssim T^2Q^4 +\sum_{2^\Z\ni R \geq T^{-1}} Q^4 A^{-2\alpha}Q^{2\tau-2\alpha \delta}R T^2(RT)^{-100}\notag\\
&\lesssim T^2Q^4\bigl( 1 + T^{-1}A^{-2\alpha}Q^{2\tau-2\alpha \delta}\bigr). \label{bad freq}
\end{align}

We choose
\begin{align}\label{choices}
A:=\bigl(\tfrac{|E||F|}{T^2}\Bigr)^{\frac12-\frac1{r_0}}, \quad  \alpha:=\tfrac{4-r_0}{2(r_0-2)}, \quad \delta:=\alpha^{-1}, \qtq{and} \tau:=\delta.
\end{align}
Using that $|E|,|F|\leq 1$ and the restrictions on $T$ and $Q$, we find
$$
T^{-1} A^{-2\alpha} \geq T^{-\frac{2(r_0-2)}{r_0}}\geq Q^{\frac{2(r_0-2)}{r_0\sigma}} \geq Q^2
$$
provided $r_0 \geq 2/(1-\sigma)$. Thus, combining \eqref{restricted 1}, \eqref{good freq}, and \eqref{bad freq} yields
\begin{align}\label{restricted 2}
\langle \chi_E, \mathcal F_{1,Q} * \phi_T\bigl( \tfrac {\cdot}T\bigr) * \chi_F\rangle \lesssim (|E||F|)^{\frac1{r_0'}} Q^{1+\delta} T^{\frac2{r_0}} .
\end{align}
This proves that this convolution operator satisfies a restricted weak type $(r_0', r_0)$ estimate with bound $Q^{1+\delta} T^{2/{r_0}}$, provided $\sigma$ and
$r_0$ obey the restriction stated above.

Now for $p$ and $\sigma$ as in the hypotheses of the lemma, $r_0=2/(1-\sigma)$ obeys $2<r_0<p$.  Interpolating between \eqref{L1Linfty bound} and the restricted weak type $(r_0', r_0)$ estimate above, we deduce that
$$
\Bigl\|\mathcal F_{1,Q} * \phi_T\bigl( \tfrac {\cdot}T\bigr) * f\Bigr\|_{L^p(\T)}\lesssim Q^{\frac2p( 1 + \eps)}T^{\frac2p} \|f\|_{L^{p'}(\T)},
$$
which proves the lemma.
\end{proof}

The next result performs the second step in the program laid out above, namely, it allows us to pass from convolution on $\T$ to convolution on $\R$.

\begin{lemma}\label{L:time convolution R}
Let $2<p\leq \infty$ and assume $g:\T\to [0, \infty)$ is the kernel of a bounded convolution operator from $L^{p'}(\T)$ to $L^p(\T)$ with norm $A$.  Then for any $\theta\in(0,1]$ and $R>0$ we have
\begin{align*}
\Bigl\|\int_{-R}^R g(\theta s) f(t-s)\,ds\Bigr\|_{L^p(\R)}\lesssim A\theta^{-\frac2p}(1+\theta R) \|f\|_{L^{p'}(\R)}.
\end{align*}
\end{lemma}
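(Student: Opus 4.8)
The plan is to reduce the statement to the case $\theta=1$ by a scaling argument, and then to transfer the assumed mapping property on $\T$ to the real line by periodization.

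\emph{Scaling.} Substituting $v=\theta s$ and then replacing $t$ by $t/\theta$, one checks that
\[
\int_{-R}^{R} g(\theta s)\,f(t-s)\,ds = \theta^{-1}\bigl[(g\chi_{[-M,M]})*F\bigr](\theta t),\qquad M:=\theta R,\ \ F(w):=f(w/\theta),
\]
with $*$ denoting convolution on $\R$. Since $\|f\|_{L^{p'}(\R)}=\theta^{-1/p'}\|F\|_{L^{p'}(\R)}$ and $\tfrac2p+\tfrac1{p'}=1+\tfrac1p$, the powers of $\theta$ balance, so the lemma follows once we prove
\[
\bigl\|(g\chi_{[-M,M]})*F\bigr\|_{L^p(\R)}\lesssim A(1+M)\,\|F\|_{L^{p'}(\R)}\qquad\text{for all }M>0,
\]
where now $g$ is a nonnegative $1$-periodic function on $\R$ whose convolution operator maps $L^{p'}(\T)\to L^p(\T)$ with norm $A$. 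As $g\ge 0$, replacing $F$ by $|F|$ we may assume $F\ge0$, and the case $p=\infty$ is immediate (there the operator norm of $g*\cdot$ on $\T$ equals $\|g\|_{L^\infty(\T)}$), so I will take $p<\infty$.

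\emph{Reduction to a one-period kernel.} Write $I_j:=[j-\tfrac12,j+\tfrac12)$ for $j\in\Z$. Periodicity of $g$ gives $g\chi_{I_j}=\tau_j(g\chi_{I_0})$, where $\tau_j$ is translation by $j$, and $\chi_{[-M,M]}\le\sum_{|j|<M+1}\chi_{I_j}$; hence, by nonnegativity, the triangle inequality, and translation invariance of $\|\cdot\|_{L^p(\R)}$,
\[
\bigl\|(g\chi_{[-M,M]})*F\bigr\|_{L^p(\R)}\le\sum_{|j|<M+1}\bigl\|(g\chi_{I_0})*F\bigr\|_{L^p(\R)}\lesssim(1+M)\,\bigl\|(g\chi_{I_0})*F\bigr\|_{L^p(\R)}.
\]
So it suffices to prove the core estimate $\|(g\chi_{I_0})*F\|_{L^p(\R)}\lesssim A\|F\|_{L^{p'}(\R)}$.

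\emph{Core estimate.} Decompose $F=\sum_{n\in\Z}F_n$ with $F_n:=F\chi_{[n,n+1)}$. Since $g\chi_{I_0}$ is supported in an interval of length $1$, the function $(g\chi_{I_0})*F_n$ is supported in an interval of length $2$, so on each block $[m,m+1)$ only $F_{m-1},F_m,F_{m+1}$ contribute; a three-term H\"older bound followed by reindexing in $n$ gives $\|(g\chi_{I_0})*F\|_{L^p(\R)}^p\lesssim\sum_n\|(g\chi_{I_0})*F_n\|_{L^p(\R)}^p$. For a single $n$, since $(g\chi_{I_0})*F_n\ge0$ and $p\ge1$, its $L^p(\R)$ norm is at most the $L^p(\T)$ norm of its periodization $\sum_k\bigl[(g\chi_{I_0})*F_n\bigr](\cdot-k)$, which by the standard identity equals the $\T$-convolution of $\mathrm{per}(g\chi_{I_0})=g$ with $\mathrm{per}(F_n)$; because $F_n$ is supported in a fundamental domain, $\|\mathrm{per}(F_n)\|_{L^{p'}(\T)}=\|F_n\|_{L^{p'}(\R)}$, so the hypothesis yields $\|(g\chi_{I_0})*F_n\|_{L^p(\R)}\le A\|F_n\|_{L^{p'}(\R)}$. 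Finally, since $p\ge 2>p'$ and hence $\ell^{p'}\hookrightarrow\ell^p$, one has $\sum_n\|F_n\|_{L^{p'}(\R)}^p\le\|F\|_{L^{p'}(\R)}^p$, which closes the argument; undoing the scaling then gives the lemma.

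\emph{Main obstacle.} The genuinely substantive point is the transference step — identifying the periodization of the $\R$-convolution with a $\T$-convolution against $g$, and the attendant passage from $\sum_n\|F_n\|_{p'}^{p'}$ to $\sum_n\|F_n\|_{p'}^{p}$, which is exactly where the hypothesis $p>2$ is used. The remaining steps are routine bookkeeping with supports and integer translates.
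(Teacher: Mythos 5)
Your proof is correct and follows essentially the same route as the paper: rescale to reduce to $\theta=1$, decompose into unit intervals so that periodicity lets you invoke the hypothesis on $\T$, pick up the factor $1+\theta R$ by counting the $O(1+\theta R)$ interacting periods, and close with $\ell^{p'}\hookrightarrow\ell^{p}$, the one place where $p\geq 2$ enters. The paper runs the identical scheme in dualized form, bounding the bilinear pairing over unit squares with $|m-n|\lesssim 1+\theta R$, whereas you work directly on the operator by splitting the kernel into integer translates of a one-period piece and using the periodization identity; the difference is organizational rather than substantive.
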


\begin{proof}
We argue by duality.  Pick $k\in \Z$ such that $\theta R\leq k$.  Using the hypothesis and the fact that $p>2$, for $h\in L^{p'}(\R)$ we estimate
\begin{align*}
\Bigl|\int_\R \overline{h(t)}& \int_{-R}^R g(\theta s) f(t-s)\,ds\, dt\Bigr|\\
&\leq \iint_{|t-s|\leq R} |h(t)| |f(s)| g(\theta[t-s])\,ds\, dt\\
&\leq \theta^{-2} \iint_{|t-s|\leq k} \bigl|h\bigl(\tfrac t\theta\bigr)\bigr| \bigl|f\bigl(\tfrac s\theta\bigl)\bigl| g(t-s)\,ds\, dt\\
&\leq \theta^{-2} \sum_{\substack{m,n\in \Z\\ |m-n|\leq k+1}}\int_0^1\int_0^1 \bigl|h\bigl(\tfrac {t+n}\theta\bigr)\bigr| \bigl|f\bigl(\tfrac {s+m}\theta\bigl)\bigl| g(t-s)\,ds\, dt\\
&\leq \theta^{\frac2{p'}-2} A \sum_{\substack{m,n\in \Z\\ |m-n|\leq k+1}} \|h\|_{L^{p'}([\frac n\theta, \frac{n+1}\theta])}\|f\|_{L^{p'}([\frac m\theta, \frac{m+1}\theta])}\\
&\leq \theta^{-\frac2{p}} A (2k+3) \Bigl( \sum_{n\in \Z}\|h\|_{L^{p'}([\frac n\theta, \frac{n+1}\theta])}^{p'}\Bigr)^{1/{p'}}
	\Bigl( \sum_{m\in \Z}\|f\|_{L^{p'}([\frac m\theta, \frac{m+1}\theta])}^{p}\Bigr)^{1/{p}}\\
&\leq \theta^{-\frac2{p}} A (2k+3) \|h\|_{L^{p'}(\R)}\Bigl( \sum_{m\in \Z}\|f\|_{L^{p'}([\frac m\theta, \frac{m+1}\theta])}^{p'}\Bigr)^{1/{p'}}\\
&\leq \theta^{-\frac2{p}} A (2k+3) \|h\|_{L^{p'}(\R)}\|f\|_{L^{p'}(\R)}.
\end{align*}
This completes the proof of the lemma.
\end{proof}

We now have all the necessary ingredients to prove Strichartz estimates for the kernel $\tilde K_N$.

\begin{proof}[Proof of Proposition~\ref{P:Strichartz}]
Fix $p,r$ as in the statement of the proposition.  Combining Lemmas~\ref{L:dispersive}, \ref{L:time convolution torus}, and~\ref{L:time convolution R}, we obtain
\begin{align*}
\|\tilde K_N(t)*F&\|_{L_t^pL_x^r([0,1]\times\T^d)}\\
&\lesssim  \sum_{j=1}^d \  \sum_{1\leq Q\leq N^{2\sigma}} \sum_{T=N^{-2}}^{N^{2\sigma-2}/Q}\!\!\! (QT)^{\frac dr -\frac d2} Q^{\frac2p(1+\eps)}T^{\frac2p}	\|F\|_{L_t^{p'}L_x^{r'}([0,1]\times\T^d)}\\
&\lesssim N^{2(\frac d2-\frac 2p-\frac dr)} \|F\|_{L_t^{p'} L_x^{r'} ([0,1]\times\T^d)},
\end{align*}
provided we take $\sigma>0$ sufficiently small so that $\sigma<\min\{\frac12,1-\frac2p\}$ and $\frac d2 -\frac2p(1+\eps)- \frac dr>0$.
This completes the proof of the proposition.
\end{proof}

\begin{proof}[Proof of Theorem~\ref{T:Strichartz}]
Fix $p>p_0:=\frac{2(d+2)}{d}$ and let $f\in L^2(\T^d)$ be normalized via $\|f\|_{L^2(\T^d)}=1$.  By Bernstein's inequality,
$$
\|e^{it\Delta}P_{\leq N} f\|_{L_{t,x}^\infty([0,1]\times\T^d)}\leq C N^{\frac d2} \|e^{it\Delta}P_{\leq N} f\|_{L_t^\infty L_x^2([0,1]\times\T^d)}\leq C N^{\frac d2}
$$
for some $C>0$.  Thus, we may write
\begin{align}\label{to estimate}
\|e^{it\Delta}P_{\leq N} &f\|_{L_{t,x}^p([0,1]\times\T^d)}\notag\\
&= \int_0^\infty p\lambda^{p-1}\bigl|\{(t,x)\in [0,1]\times\T^d:\, \bigl|(e^{it\Delta}P_{\leq N} f)(x) \bigr|>\lambda\} \bigr|\, d\lambda\notag\\
&= \int_0^{CN^{\frac d2}} p\lambda^{p-1}\bigl|\{(t,x)\in [0,1]\times\T^d:\, \bigl|(e^{it\Delta}P_{\leq N} f)(x) \bigr|>\lambda\} \bigr|\, d\lambda.
\end{align}

For most values of $\lambda$, we exploit the non-scale-invariant Strichartz estimates of Bourgain and Demeter recorded in Theorem~\ref{T:epsStrichartz}.  Specifically, for small $\delta>0$ to be chosen later,
this theorem together with Chebyshev's inequality yields
\begin{align}\label{small lambda}
\int_0^{N^{\frac d2-\delta}} p\lambda^{p-1}&\bigl|\{(t,x)\in [0,1]\times\T^d:\, \bigl|(e^{it\Delta}P_{\leq N} f)(x) \bigr|>\lambda \}\bigr|\, d\lambda\notag\\
&\lesssim \int_0^{N^{\frac d2-\delta}} p\lambda^{p-1} \frac{ N^{p_0\eta}}{\lambda^{p_0}}\, d\lambda
\lesssim N^{p(\frac d2-\frac{d+2}p) + p_0\eta-\delta(p-p_0)}\lesssim N^{p(\frac d2-\frac{d+2}p)},
\end{align}
provided we take $\eta<\delta(p-p_0)/p_0$.  This renders acceptable the contribution of $\lambda\leq N^{\frac d2-\delta}$ to the RHS\eqref{to estimate}.

It remains to estimate the contribution of large values of $\lambda$.  To this end, fix $\lambda>N^{\frac d2-\delta}$ and let
$$
\Omega:= \{(t,x)\in [0,1]\times\T^d:\, \bigl|(e^{it\Delta}P_{\leq N} f)(x) \bigr|>\lambda \}.
$$
By choosing some $\omega\in\{0, \frac \pi2, \pi, \frac{3\pi}2\}$ appropriately, we have that
$$
\Omega_\omega:= \{(t,x)\in [0,1]\times\T^d:\, \Re\bigl(e^{i\omega} e^{it\Delta}P_{\leq N} f\bigr)(x)>\tfrac\lambda2 \}
$$
satisfies $|\Omega|\leq 4|\Omega_\omega|$.  By the definition of $\Omega_\omega$ and Cauchy--Schwarz,
\begin{align}
\lambda^2|\Omega_\omega|^2
&\lesssim \Bigl|\int_0^1\int_{\T^d} (e^{it\Delta}P_{\leq N} f)(x)\chi_{\Omega_\omega}(t,x)\, dx\, dt\Bigr|^2 \notag\\
&\lesssim \|f\|_{L^2(\T^d)}^2 \Bigl\| \int_0^1 e^{-it\Delta}P_{\leq N}\chi_{\Omega_\omega}(t)\, dt \Bigr\|_{L^2(\T^d)}^2\notag\\
&\lesssim \int_{\T^d} \int_0^1\int_0^1 \chi_{\Omega_\omega}(t,x)\overline{\bigl[e^{i(t-s)\Delta}P_{\leq N}^2\chi_{\Omega_\omega}(s)\bigr]}(x)\,ds\,dt\, dx\notag\\
&\lesssim \langle \chi_{\Omega_\omega}, K_N \chi_{\Omega_\omega}\rangle_{L^2_{t,x}}. \label{E:omE}
\end{align}

To continue, we fix $r\in(p_0,p)$ and split $K_N = \tilde K_N + [K_N-\tilde K_N]$.  Choosing $\sigma$ small, we may apply Proposition~\ref{P:Strichartz}
with exponent pair $(r,r)$ to obtain
$$
\bigl|\langle \chi_{\Omega_\omega}, \tilde K_N \chi_{\Omega_\omega}\rangle_{L^2_{t,x}}\bigr|\lesssim |\Omega_\omega|^{\frac2{r'}} N^{d-\frac{2(d+2)}{r}}.
$$
On the other hand, by \eqref{E:diff bound},
$$
\bigl|\langle \chi_{\Omega_\omega}, [K_N-\tilde K_N] \chi_{\Omega_\omega}\rangle_{L^2_{t,x}}\bigr|\lesssim |\Omega_\omega|^2 N^{d(1-\sigma)}.
$$
Combining these inequalities with \eqref{E:omE} we obtain
\begin{align*}
\lambda^2|\Omega_\omega|^2\lesssim |\Omega_\omega|^{\frac2{r'}} N^{d-\frac{2(d+2)}{r}}+ |\Omega_\omega|^2 N^{d(1-\sigma)}.
\end{align*}
We now choose $\delta\ll \frac{d\sigma}2$ so that the second term on the right-hand side of the inequality above is much smaller than the left-hand side.  Thus we deduce that
\begin{align*}
|\Omega|\leq 4|\Omega_\omega|\lesssim N^{\frac r2(d-\frac{2(d+2)}{r})}\lambda^{-r}.
\end{align*}
Recalling the definition of $\Omega$ and that $r\in(p_0,p)$, it then follows that
\begin{align*}
\int_{N^{\frac d2-\delta}}^{CN^{\frac d2}} p\lambda^{p-1}&\bigl|\{(t,x)\in [0,1]\times\T^d:\, \bigl|(e^{it\Delta}P_{\leq N} f)(x) \bigr|>\lambda\} \bigr|\, d\lambda\\
&\lesssim N^{\frac r2(d-\frac{2(d+2)}{r})} \int_{N^{\frac d2-\delta}}^{CN^{\frac d2}}\lambda^{p-1-r}\, d\lambda\\
&\lesssim N^{p(\frac d2-\frac{d+2}p)}.
\end{align*}
This bounds the contribution of large values of $\lambda$ to \eqref{to estimate} in an acceptable manner and so completes the proof of Theorem~\ref{T:Strichartz}.
\end{proof}

\section{Bilinear Strichartz estimates}

The purpose of this section is to prove a bilinear estimate (in all dimensions) that we will use in our treatment of the energy-critical problem in three dimensions.
The prior treatment \cite{HTT} of this problem used a trilinear estimate whose proof is much more complicated.  The idea of splitting into frequency cubes, which
we will also use, is just the first step in their proof.

\begin{lemma}[Bilinear Strichartz estimate]\label{L:bilinear1}
Fix $d\geq 3$ and $T\leq 1$.  Then for every $1\leq N_2\leq N_1$ we have
\begin{align}\label{L2 bilinear}
\|u_{N_1}v_{N_2}\|_{L_{t,x}^2([0,T)\times\T^d)}\lesssim N_2^{\frac{d-2}2}\|u_{N_1}\|_{Y^0([0,T))} \| v_{N_2}\|_{Y^0([0,T))}.
\end{align}
The implicit constant does not depend on $T$.
\end{lemma}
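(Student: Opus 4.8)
The plan is to derive \eqref{L2 bilinear} from the scale-invariant cube Strichartz estimate \eqref{UCStrichartz}, using the standard device of decomposing the higher-frequency factor into cubes at the lower-frequency scale; this is the first step of the argument in \cite{HTT}, but with \eqref{UCStrichartz} in hand everything after it becomes elementary, and in particular no trilinear estimate and no temporal orthogonality of free evolutions are needed.

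First I would fix a tiling $\mathcal C$ of the frequency support of $u_{N_1}$ by disjoint cubes $C\subset\R^d$ of side-length $N_2$, and write $u_{N_1}=\sum_{C\in\mathcal C}u_C$, where $u_C$ denotes the sharp Fourier restriction of $u_{N_1}$ to the integer points of $C$. For each fixed time $t$, the spatial Fourier support of $u_C(t)\,v_{N_2}(t)$ is contained in $C$ enlarged by $O(N_2)$, hence in a cube of side $\lesssim N_2$; therefore, as $C$ ranges over $\mathcal C$, these supports have bounded overlap. By Plancherel in $x$ followed by integration in $t$, $\langle u_Cv_{N_2},\,u_{C'}v_{N_2}\rangle_{L^2_{t,x}}=0$ unless $C$ and $C'$ are neighbours, so that
\begin{align*}
\|u_{N_1}v_{N_2}\|_{L^2_{t,x}([0,T)\times\T^d)}^2\lesssim\sum_{C\in\mathcal C}\|u_Cv_{N_2}\|_{L^2_{t,x}([0,T)\times\T^d)}^2.
\end{align*}
It is worth noting that only spatial (not space-time) frequency localization enters here.

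Next, for a single cube $C$ I would use Hölder's inequality $\|u_Cv_{N_2}\|_{L^2_{t,x}}\le\|u_C\|_{L^4_{t,x}}\|v_{N_2}\|_{L^4_{t,x}}$ together with \eqref{UCStrichartz} applied to $u_C$ (the cube $C$ has side-length $N_2\ge1$) and to $v_{N_2}$ (whose frequencies lie in a cube of side $\lesssim N_2$ about the origin; alternatively one may invoke \eqref{UStrichartz} directly). The hypothesis $d\ge3$ is used precisely here: it is equivalent to $4>\tfrac{2(d+2)}d$, which is exactly what makes the exponent $4$ admissible in \eqref{UCStrichartz}. Moreover the powers of $N_2$ combine as
\begin{align*}
\Bigl(\tfrac d2-\tfrac{d+2}4\Bigr)+\Bigl(\tfrac d2-\tfrac{d+2}4\Bigr)=d-\tfrac{d+2}2=\tfrac{d-2}2,
\end{align*}
giving $\|u_Cv_{N_2}\|_{L^2_{t,x}}\lesssim N_2^{(d-2)/2}\|u_C\|_{Y^0}\|v_{N_2}\|_{Y^0}$ with a constant independent of $T$, inherited from that of \eqref{UCStrichartz}.

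Finally I would square this, sum over $C\in\mathcal C$, and use that the $Y^0$ norm is square-additive over disjoint sets of frequencies, so that $\sum_{C\in\mathcal C}\|u_C\|_{Y^0([0,T))}^2=\|u_{N_1}\|_{Y^0([0,T))}^2$; taking square roots produces \eqref{L2 bilinear}. I do not expect a genuine obstacle: the only two points requiring care are the bounded-overlap count underlying the almost-orthogonality step (together with the observation that spatial localization alone suffices for it) and the verification that the exponent bookkeeping reproduces $N_2^{(d-2)/2}$ exactly — which, pleasantly, it does if and only if $d\ge3$, matching the hypothesis.
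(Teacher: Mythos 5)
Your proposal is correct and follows essentially the same route as the paper: decompose $u_{N_1}$ into sharp Fourier projections onto cubes of side-length $N_2$, exploit almost orthogonality of the products $u_C v_{N_2}$ in the spatial frequency variable, apply H\"older together with the $L^4_{t,x}$ cube Strichartz estimate \eqref{UCStrichartz} (admissible precisely because $d\geq 3$), and conclude by the square-additivity of the $Y^0$ norm over the cubes. The exponent bookkeeping $2\bigl(\tfrac d2-\tfrac{d+2}4\bigr)=\tfrac{d-2}2$ matches the paper exactly, so no changes are needed.
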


\begin{remark}
In the Euclidean setting one has the following stronger estimate:
\begin{align*}
\|u_{N_1}v_{N_2}\|_{L_{t,x}^2(\R\times\R^d)}\lesssim N_2^{\frac{d-1}2} N_1^{-\frac12} \|u_{N_1}\|_{Y^0} \| v_{N_2}\|_{Y^0}.
\end{align*}
No such estimate holds on the torus.  Indeed, choosing $u$ and $v$ to be linear solutions with characters as initial data, one can
see that no negative power of the higher frequency can appear on the RHS\eqref{L2 bilinear}.
\end{remark}

\begin{proof}
To prove \eqref{L2 bilinear}, we decompose $\R^d=\cup_j C_j$, where each $C_j$ is a cube of side-length $N_2$.  We write $P_{C_j}$ for the (sharp) Fourier
projection onto this cube.  As the spatial Fourier support of $(P_{C_j} u_{N_1})v_{N_2}$
is contained in a fixed dilate of the cube $C_j$, for each $j$, we deduce that
\begin{align*}
\|u_{N_1}v_{N_2}\|_{L_{t,x}^2([0,T)\times\T^d)}\lesssim \Bigl(\sum_j \bigl\|(P_{C_j}u_{N_1})v_{N_2}\bigr\|_{L_{t,x}^2([0,T)\times\T^d)}^2\Bigr)^{1/2}.
\end{align*}
Using the Strichartz inequality \eqref{UCStrichartz}, we estimate
\begin{align*}
\bigl\|(P_{C_j}u_{N_1})v_{N_2}\bigr\|_{L_{t,x}^2([0,T)\times\T^d)}
&\lesssim \|P_{C_j}u_{N_1}\|_{L_{t,x}^4([0,T)\times\T^d)}\|v_{N_2}\|_{L_{t,x}^4([0,T)\times\T^d)}\\
&\lesssim N_2^{\frac{d-2}2}\|P_{C_j}u_{N_1}\|_{Y^0}\|v_{N_2}\|_{Y^0}.
\end{align*}
Observing that
$$
\|u\|_{Y^0([0,T))}= \Bigl(\sum_j \bigl\|P_{C_j}u_{N_1}\bigr\|_{Y^0([0,T))}^2\Bigr)^{1/2},
$$
we immediately derive \eqref{L2 bilinear}.
\end{proof}

By further exploiting the ideas in \cite{HTT}, one can obtain a stronger bilinear Strichartz estimate.  We will not use this result in this paper and simply record the estimate for comparison.
In the case $d=4$, what follows is essentially \cite[Proposition~2.8]{HTT:4d}.  Their argument can be adapted to dimensions $d\geq 3$ because of the $L_{t,x}^4$ Strichartz estimate
given in Theorem~\ref{T:Strichartz}.

\begin{lemma}[Improved bilinear Strichartz estimate]\label{L:bilinear2}
Fix $d\geq 3$ and $T\leq 1$.  Then there exists $\delta>0$ such that for every $1\leq N_2\leq N_1$ we have
\begin{align*}
\|u_{N_1}v_{N_2}\|_{L_{t,x}^2([0,T)\times\T^d)}\lesssim N_2^{\frac{d-2}2}\bigl(\tfrac{N_2}{N_1}+\tfrac1{N_2} \bigr)^\delta\|u_{N_1}\|_{Y^0([0,T))} \| v_{N_2}\|_{Y^0([0,T))}.
\end{align*}
\end{lemma}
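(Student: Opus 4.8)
Since this estimate is recorded only for comparison, the plan is to indicate how the argument of \cite[Proposition~2.8]{HTT:4d} adapts to all dimensions $d\geq3$; the sole point at which that argument used the ambient dimension is through a scale-invariant $L^4_{t,x}$ Strichartz inequality, which Theorem~\ref{T:Strichartz} now supplies for every $d\geq3$ (note that $4>\tfrac{2(d+2)}d$ exactly when $d\geq3$).

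The first step is a reduction to free solutions. It suffices to prove that, for some $\delta_0>0$, all $\phi,\psi\in L^2(\T^d)$, and every subinterval $I\subseteq[0,1]$,
\[
\|e^{it\Delta}P_{N_1}\phi\cdot e^{it\Delta}P_{N_2}\psi\|_{L^2_{t,x}(I\times\T^d)}\lesssim N_2^{\frac{d-2}2}\bigl(\tfrac{N_2}{N_1}+\tfrac1{N_2}\bigr)^{\delta_0}\|\phi\|_{L^2}\|\psi\|_{L^2}.
\]
Indeed, expanding two $U^2$-atoms along their common refinement of $[0,1]$ and using that $\|\cdot\|_{L^2_{t,x}}$ is Hilbertian transfers this bound, with the same gain, to a $U^2_\Delta L^2\times U^2_\Delta L^2$ estimate; bilinearly interpolating it against the gain-free bound $\|u_{N_1}v_{N_2}\|_{L^2_{t,x}}\lesssim N_2^{\frac{d-2}2}\|u_{N_1}\|_{U^4_\Delta L^2}\|v_{N_2}\|_{U^4_\Delta L^2}$ (which follows from \eqref{UStrichartz} together with Galilei invariance, exactly as in the proof of Lemma~\ref{L:bilinear1}) upgrades it to a $V^2_\Delta L^2\times V^2_\Delta L^2$ estimate with an arbitrarily small loss in the exponent, and $Y^0\hookrightarrow V^2_\Delta L^2$ then yields the claim with a final, still positive, exponent $\delta$.

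For the free-solution estimate I would first run the cube decomposition of Lemma~\ref{L:bilinear1}: writing $\R^d=\cup_j C_j$ with $C_j$ a cube of side $N_2$, the products $(e^{it\Delta}P_{C_j}\phi)(e^{it\Delta}P_{N_2}\psi)$ have almost disjoint spatial Fourier supports, so it is enough to gain for one such $j$ and then square-sum (using that $L^2$, $U^2_\Delta L^2$, and $Y^0$ all respect this decomposition of frequency space). Fixing a cube $C_j$, which sits at distance $\sim N_1$ from the origin, the gain is extracted via a Whitney-type decomposition of the two frequency regions into cubes $\tau,\tau'$ of a common dyadic side $M$, $1\le M\le N_2$, retaining only pairs whose caps on the characteristic hypersurface $\{(\xi,-\sum_i\theta_i\xi_i^2)\}$ are transversal. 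For a transversal pair the associated bilinear pieces are almost orthogonal in $L^2_{t,x}(I\times\T^d)$ — modulo the temporal correction discussed below — and each obeys, after a Galilei transformation followed by the $L^4_{t,x}$ Strichartz estimate of Theorem~\ref{T:Strichartz} at scale $M$, the bound $\|e^{it\Delta}f_\tau\cdot e^{it\Delta}g_{\tau'}\|_{L^2_{t,x}}\lesssim M^{\frac{d-2}2}\|f_\tau\|_{L^2}\|g_{\tau'}\|_{L^2}$; square-summing over transversal pairs gives $M^{\frac{d-2}2}\|P_{C_j}\phi\|_{L^2}\|\psi\|_{L^2}$, i.e., a gain of $(M/N_2)^{\frac{d-2}2}$ over the trivial estimate. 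The (few) pairs failing the transversality test are absorbed by Lemma~\ref{L:bilinear1}; balancing the two contributions lets one take $M\sim\max\{N_2^2/N_1,\sqrt{N_2}\}$, the first term quantifying the transversality between frequency regions at distance $\sim N_1$ (the source of the $N_2/N_1$-gain) and the second the intrinsic curvature of the hypersurface over a cube of side $N_2$ (the source of the $1/N_2$-gain). Together these account for the factor $(\tfrac{N_2}{N_1}+\tfrac1{N_2})^{\delta_0}$ after adjusting the exponent.

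The main obstacle — and what makes this genuinely harder than Lemma~\ref{L:bilinear1} — is the temporal correction: over a finite time interval the free evolutions are not orthogonal in time, so the almost-orthogonality used above carries an error in which the exact orthogonality factor is replaced by $\min\bigl(1,|\Phi(\xi,\eta)-\Phi(\xi',\eta')|^{-1}\bigr)$, where $\Phi(\xi,\eta):=\sum_i\theta_i(\xi_i^2+\eta_i^2)$ and $\xi+\eta=\xi'+\eta'$, and this must be controlled with no arithmetic hypothesis on the $\theta_j$. It is precisely the near-diagonal part of this error term for which the scale-invariant $L^4_{t,x}$ estimate of Theorem~\ref{T:Strichartz} is indispensable in dimension three: it lets one absorb the near-resonant interactions through a Strichartz bound at the appropriate frequency scale rather than through a lattice-point count, after which the bilinear bookkeeping of \cite[Proposition~2.8]{HTT:4d} goes through in every dimension $d\geq3$.
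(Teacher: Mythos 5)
The paper does not actually prove this lemma: it records the estimate for comparison only and refers to \cite[Proposition~2.8]{HTT:4d}, noting that that argument extends to all $d\geq 3$ once Theorem~\ref{T:Strichartz} supplies the scale-invariant $L^4_{t,x}$ bound. So your proposal must be judged as a sketch of that adaptation. Your outer reductions are fine and match the standard scheme: reduce to free solutions via $U^2$-atoms, interpolate a $U^2$ bound with gain against the gain-free bound of Lemma~\ref{L:bilinear1} (which holds at $U^p$-level for some $p>2$) to obtain a $V^2$, hence $Y^0$, bound with a slightly smaller but still positive exponent, and perform the side-$N_2$ cube decomposition exactly as in Lemma~\ref{L:bilinear1}.

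The problem is the core free-solution step. Your Whitney decomposition into cubes $\tau,\tau'$ of a common side $M$ with ``transversal pairs almost orthogonal in $L^2_{t,x}$'' does not work as stated. First, transversality between the two frequency regions is automatic (they are separated by $\sim N_1$), and the Remark following Lemma~\ref{L:bilinear1} shows that on the torus transversality alone cannot produce negative powers of $N_1$; so ``retaining only transversal pairs'' selects nothing, and the claimed source of the $N_2/N_1$-gain is illusory. Second, and more seriously, the pairwise products are not almost orthogonal: for a fixed sum cube $\tau+\tau'$, the resonance function $\Phi(\xi,\eta)=\sum_i\theta_i(\xi_i^2+\eta_i^2)$ has gradient of size $\sim N_1$ only in the direction of $\xi-\eta$, so constraining $\Phi$ to its per-cell resolution confines $\xi$ merely to a slab of thickness $\sim M$ inside $C_j$; roughly $(N_2/M)^{d-1}$ pairs then share the same space--time frequency cell, the bounded-overlap square sum you invoke is unavailable, and with it the gain $(M/N_2)^{\frac{d-2}{2}}$ and the balancing $M\sim\max\{N_2^2/N_1,\sqrt{N_2}\}$ collapse. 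The mechanism in \cite{HTT}, \cite{HTT:4d} is anisotropic, not a common-scale cube decomposition: the $N_2/N_1$-gain comes from slicing $C_j$ into slabs of thickness $\sim N_2^2/N_1$ transverse to its center (dually, time intervals of length $\sim N_2/N_1$), so that distinct slabs carry separated time frequencies, while the $N_2^{-\delta}$-gain comes from the room above the critical exponent (e.g.\ H\"older with exponents $4\pm\eps$ and the cube-localized Strichartz estimate), not from curvature at scale $\sqrt{N_2}$. Finally, the point you yourself identify as the crux --- summing the $\min(1,|\Delta\Phi|^{-1})$ temporal tails arising from the finite time window, with no arithmetic hypothesis on the $\theta_j$ --- is exactly where the work lies, and asserting that ``the bookkeeping of \cite{HTT:4d} goes through'' is a deferral, not an argument. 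As written, the proposal replaces the one genuinely hard step by a citation and supports the rest with an orthogonality claim that fails.
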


\section{Well-posedness for the energy-critical NLS}\label{S:lowD}
The main estimates needed to prove local well-posedness for the energy-critical NLS are contained in the following proposition.

\begin{proposition}\label{P:nonlinearity}
Fix $d\in\{3,4\}$.  Then for any $0<T\leq 1$,
\begin{align}\label{F estimate}
\Bigl\| \int_0^t e^{i(t-s)\Delta} F(u(s))\, ds\Bigr\|_{X^1([0,T])}\lesssim \|u\|_{X^1([0,T])}^{\frac{d+2}{d-2}}
\end{align}
and
\begin{align}\label{F diff estimate}
\Bigl\| \int_0^t e^{i(t-s)\Delta} \bigl[F(u+w)(s)&-F(u)(s)\bigr]\, ds\Bigr\|_{X^1([0,T])}\notag\\
&\lesssim \|w\|_{X^1([0,T])}\bigl(\|u\|_{X^1([0,T])}+\|w\|_{X^1([0,T])}\bigr)^{\frac{4}{d-2}}.
\end{align}
The implicit constants do not depend on $T$.
\end{proposition}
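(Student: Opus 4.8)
The plan is to run the standard contraction-mapping scheme in $X^1$, reducing everything via the duality/Remark~\ref{R} machinery to a multilinear estimate on the Duhamel integral. By Remark~\ref{R}, it suffices to bound $\|F(u)\|_{L^1_tH^1_x([0,T)\times\T^d)}$ — but this crude estimate is far too lossy, so instead I would use the sharper characterization of the $X^1$ (or $Y^1$) norm by duality against $V^2_\Delta$ functions: $\|v\|_{X^1}\lesssim \sup \bigl|\int_0^T\langle \partial_t v, w\rangle\,dt\bigr|$ over $\|w\|_{Y^{-1}}\le 1$ (this is the usual $U^p$–$V^{p'}$ duality, available from the general theory cited from \cite{KochTatVis}). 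Thus \eqref{F estimate} reduces to controlling
\[
\Bigl| \int_0^T \!\!\int_{\T^d} \langle\nabla\rangle F(u)\,\overline{\langle\nabla\rangle^{-1} w}\,dx\,dt \Bigr|
\lesssim \|u\|_{X^1}^{\frac{d+2}{d-2}}\|w\|_{Y^{1}},
\]
and similarly for the difference estimate \eqref{F diff estimate}. Since $F(u)=\pm|u|^{4/(d-2)}u$ and $\langle\nabla\rangle(|u|^{4/(d-2)}u)$ distributes (via the fractional Leibniz rule / the fact that after Littlewood–Paley decomposition the highest frequency sits on one factor) one is left with a genuinely multilinear spacetime integral of $\frac{d+2}{d-2}+1$ factors, one of which carries a derivative and one of which is the test function.

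The core of the argument is then a frequency-localized estimate: after a Littlewood–Paley decomposition $u=\sum_N u_N$, one must sum over the frequencies $N_1\ge N_2\ge\cdots$ of the various factors. The standard move (going back to \cite{HTT} in $d=3$ and \cite{HTT:4d} in $d=4$) is to pair the two highest frequencies using the bilinear estimate Lemma~\ref{L:bilinear1}, which gains $N_2^{(d-2)/2}$ and, crucially, carries no loss in the highest frequency $N_1$ (so it can absorb the single derivative $\langle\nabla\rangle\sim N_1$ at the cost of a full power $N_1$, matching the scaling of $N_2^{(d-2)/2}$ against the energy norm), while the remaining factors are estimated in $L^{p}_{t,x}$ using the scale-invariant Strichartz estimate \eqref{UStrichartz} with an exponent $p$ slightly above $\frac{2(d+2)}{d}$. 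Concretely: in $d=3$ the nonlinearity is quintic, so we need to estimate $L^2_{t,x}\times L^2_{t,x}\times\cdots$ type products — pair factors $(1,2)$ and $(3,4)$ by Lemma~\ref{L:bilinear1} and put factor $5$ and the test function into the remaining exponents, choosing $r$ close to $\frac{10}{3}$; in $d=4$ the nonlinearity is cubic, so after extracting one bilinear pairing one is left with a single remaining factor times the test function placed in $L^p_{t,x}\times L^{p'}_{t,x}$ with $p$ close to $4=\frac{2(d+2)}{d}$. Because each application of \eqref{UStrichartz} with $p>\frac{2(d+2)}{d}$ costs a small positive power $N^{\frac d2-\frac{d+2}p}=N^{\eta}$ of the relevant frequency, and the bilinear pairing gains a genuine negative power only in $d=4$ via the improved Lemma~\ref{L:bilinear2} (in $d=3$ one instead gains from the gap between the paired frequencies and extra Littlewood–Paley pieces), one has to be a little careful to arrange the exponents so that the total power of each $N_j$ in the summand is strictly negative, ensuring summability of the geometric series. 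The constant is $T$-independent because every estimate invoked — \eqref{UStrichartz}, Lemma~\ref{L:bilinear1}, Remark~\ref{R} — holds with $T$-independent constants on $[0,T)$ for $T\le 1$.

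The difference estimate \eqref{F diff estimate} is handled identically: expanding $F(u+w)-F(u)$ by the fundamental theorem of calculus (or, for integer exponents $4/(d-2)\in\{2,4\}$ when $d=3,4$, simply by the binomial expansion) produces a sum of multilinear terms each containing at least one factor of $w$, and one applies exactly the same bilinear-plus-Strichartz scheme, always routing one copy of $w$ and one of the high-frequency factors through Lemma~\ref{L:bilinear1}; this yields the asserted bound $\|w\|_{X^1}(\|u\|_{X^1}+\|w\|_{X^1})^{4/(d-2)}$.

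The main obstacle I expect is the bookkeeping of the frequency sum in $d=3$: there the plain bilinear estimate Lemma~\ref{L:bilinear1} has \emph{no} off-diagonal gain, so closing the sum over the two low frequencies among the five quintic factors requires genuine care — one must exploit either the extra Littlewood–Paley summation room coming from the two \emph{separate} bilinear pairings (factors $1$–$2$ and $3$–$4$), or a small gain from choosing the Strichartz exponent $r$ strictly between $p_0=\frac{10}{3}$ and the endpoint, balanced against the $N^\eta$ losses so that every frequency in the summand appears to a strictly negative net power. This is precisely the simplification claimed in the introduction over \cite{HTT} — one bilinear estimate in place of a trilinear one, and no need for temporal orthogonality — so the delicate point is to verify that this simpler input genuinely suffices, which it does because Theorem~\ref{T:Strichartz} now supplies scale-invariant $L^{4}_{t,x}$ (in $d\ge3$) rather than only $L^{4+}_{t,x}$.
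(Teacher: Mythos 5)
Your overall architecture (reduce by duality against $\|\tilde v\|_{Y^{-1}}=1$, Littlewood--Paley decompose, then combine bilinear and scale-invariant Strichartz estimates and sum the dyadic frequencies) is indeed the paper's skeleton, but the concrete pairing scheme you describe does not close, and the gap is precisely at the delicate point you flag. In $d=3$, pairing factors $(1,2)$ and $(3,4)$ in $L^2_{t,x}$ via Lemma~\ref{L:bilinear1} already exhausts the H\"older budget, so the fifth factor and the test function are forced into $L^\infty_{t,x}$, not into exponents near $\tfrac{10}{3}$. More importantly, in the main case $N_0\sim N_1$ the test function carries the \emph{highest} frequency, and converting $\|v_{N_0}\|_{Y^0}\sim N_0\|v_{N_0}\|_{Y^{-1}}$ already costs a full power $N_0$; if $v_{N_0}$ is then estimated linearly (Bernstein or Strichartz) you pick up a further positive power of $N_0$ that nothing in your scheme compensates, and the dyadic sum diverges. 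The paper's key move is to put $v_{N_0}$ \emph{inside} one of the two bilinear pairings, matched against a low-frequency factor: it estimates $\|v_{N_0}u^{(2)}_{N_2}\|_{L^2_{t,x}}$ and $\|u^{(1)}_{N_1}u^{(3)}_{N_3}\|_{L^2_{t,x}}$ by Lemma~\ref{L:bilinear1} and puts $u^{(4)}_{N_4},u^{(5)}_{N_5}$ in $L^\infty_{t,x}$ by Bernstein, producing the summable factor $\tfrac{N_0N_4^{1/2}N_5^{1/2}}{N_1N_2^{1/2}N_3^{1/2}}$; in the complementary case $N_0\lesssim N_1\sim N_2$ it uses no bilinear estimate at all, only four scale-invariant $L^4_{t,x}$ bounds plus two $L^\infty_{t,x}$ bounds.

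In $d=4$ the gap is sharper. With the plain Lemma~\ref{L:bilinear1} one cannot sum the two lower frequencies in the case $N_0\sim N_1\geq N_2\geq N_3$ (the paper states this explicitly): any choice of one bilinear pairing leaves a positive power of $N_0$ after placing the remaining two factors in $L^4_{t,x}\times L^4_{t,x}$ (and note $\tfrac12+\tfrac1p+\tfrac1{p'}=1$ has no solution with $p$ near $4$, so the ``$L^p\times L^{p'}$'' bookkeeping is inconsistent), while two bilinear pairings leave an $O(1)$ coefficient and a logarithmically divergent sum over $N_2$. Your fallback, the improved bilinear estimate of Lemma~\ref{L:bilinear2}, is recorded in the paper only for comparison and is not proved there (and avoiding it is part of the point of this paper), so invoking it leaves an unproved ingredient. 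The paper's actual device in this case is to re-open the cube decomposition at scale $N_2$ inside the quadrilinear integral: it places $P_{C_j}v_{N_0}$, $P_{C_k}u^{(1)}_{N_1}$, and $u^{(2)}_{N_2}$ in $L^{10/3}_{t,x}$ via the Galilei-invariant estimate \eqref{UCStrichartz} and $u^{(3)}_{N_3}$ in $L^{10}_{t,x}$, which yields the decisive off-diagonal factor $\tfrac{N_0}{N_1}\bigl(\tfrac{N_3}{N_2}\bigr)^{2/5}$ and permits Cauchy--Schwarz over the boundedly many pairs $C_j\sim C_k$. To complete your proof you would need to adopt one of these mechanisms (or supply a proof of Lemma~\ref{L:bilinear2}).
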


\begin{proof}
As \eqref{F estimate} follows from \eqref{F diff estimate} by taking $u\equiv0$, we will only treat the latter.
Throughout the proof of the proposition all spacetime norms will be taken on $[0,T]\times\T^d$.

Fix $N\geq 1$ and observe that $P_{\leq N} [F(u+w)-F(u)]\in L^1([0,T]; H^1(\T^d))$.  By duality (see Proposition 2.11 in \cite{HTT}),
\begin{align*}
\Bigl\| \int_0^t e^{i(t-s)\Delta} &P_{\leq N} \bigl[F(u+w)(s)-F(u)(s)\bigr]\, ds\Bigr\|_{X^1([0,T])}\\
&\leq \sup_{\|\tilde v\|_{Y^{-1}([0,T])}=1} \Bigl| \int_0^T \int_{\T^d} P_{\leq N} \bigl[F(u+w)(t)-F(u)(t)\bigr] \overline{\tilde v(t,x)}\, dx\, dt \Bigr|.
\end{align*}
Let $v:=\overline{P_{\leq N} \tilde v}$.  We will prove that
\begin{align}\label{nonlinear estimate}
\Bigl| \int_0^T \int_{\T^d}  & \bigl[F(u+w)(t)-F(u)(t)\bigr] v(t,x)\, dx\, dt \Bigr| \notag \\ 
&\lesssim \|v\|_{Y^{-1}([0,T])} \|w\|_{X^1([0,T])}\bigl(\|u\|_{X^1([0,T])}+\|w\|_{X^1([0,T])}\bigr)^{\frac{4}{d-2}}.
\end{align}
Estimate \eqref{F diff estimate} follows from this by letting $N\to \infty$.

A little combinatorics shows that \eqref{nonlinear estimate} follows from an estimate of the form
\begin{align}\label{5est}
\sum_{N_0\geq 1}\ \sum_{N_1\geq \cdots \geq N_{\frac{d+2}{d-2}}\geq 1}\biggl|\int_0^T\int_{\T^d} v_{N_0}(t,x) \prod_{j=1}^{\frac{d+2}{d-2}} & u^{(j)}_{N_j}(t,x)\,dx\, dt\biggr|\notag\\
&\lesssim \|v\|_{Y^{-1}}\prod_{j=1}^{\frac{d+2}{d-2}} \|u^{(j)}\|_{X^1([0,T])},
\end{align}
by choosing $u^{(j)}$ varying over the collection $\{u, \bar u, w, \bar w\}$.  The remainder of the proof is dedicated to the verification of \eqref{5est}.

\noindent\textbf{Case I: $d=3$}.  In order to have a non-zero contribution to LHS\eqref{5est}, the two highest frequencies must be comparable.  We distinguish two subcases.

\noindent\textbf{Case I.1: $N_0\sim N_1\geq \cdots\geq N_5$}.  Using H\"older, Lemma~\ref{L:bilinear1}, Bernstein, and Cauchy--Schwarz, we estimate
\begin{align*}
&\sum_{N_0\sim N_1\geq \cdots\geq N_5} \Bigl| \int_0^T \int_{\T^d}  v_{N_0}^{\vphantom{(}}(t,x) u_{N_1}^{(1)}(t,x)\ldots u_{N_5}^{(5)}(t,x)\, dx\, dt \Bigr|\\
&\lesssim \sum_{N_0\sim N_1\geq \cdots\geq N_5}\|v_{N_0}^{\vphantom{(}} u_{N_2}^{(2)}\|_{L_{t,x}^2} \|u_{N_1}^{(1)} u_{N_3}^{(3)}\|_{L_{t,x}^2}
	\|u_{N_4}^{(4)}\|_{L_{t,x}^\infty}\|u_{N_5}^{(5)}\|_{L_{t,x}^\infty}\\
&\lesssim \sum_{N_0\sim N_1\geq \cdots\geq N_5} N_2^{\frac12}\|v_{N_0}\|_{Y^0} \| u_{N_2}^{(2)}\|_{Y^0}N_3^{\frac12}\|u_{N_1}^{(1)}\|_{Y^0} \| u_{N_3}^{(3)}\|_{Y^0} N_4^{\frac12}\| u_{N_4}^{(4)}\|_{L_t^\infty H_x^1}N_5^{\frac12}\| u_{N_5}^{(5)}\|_{L_t^\infty H_x^1}\\
&\lesssim \sum_{N_0\sim N_1\geq \cdots\geq N_5}\tfrac{N_0N_4^{\frac12}N_5^{\frac12}}{N_1N_2^{\frac12}N_3^{\frac12}}\|v_{N_0}\|_{Y^{-1}} \|u_{N_1}^{(1)}\|_{Y^1}\| u_{N_2}^{(2)}\|_{Y^1}\| u_{N_3}^{(3)}\|_{Y^1} \| u_{N_4}^{(4)}\|_{Y^1}\| u_{N_5}^{(5)}\|_{Y^1}\\
&\lesssim \|u^{(4)}\|_{Y^1} \|u^{(5)}\|_{Y^1}\sum_{N_0\sim N_1\geq N_2\geq N_3} \bigl(\tfrac{N_3}{N_2}\bigr)^{\frac12}\|v_{N_0}\|_{Y^{-1}} \|u_{N_1}^{(1)}\|_{Y^1}\| u_{N_2}^{(2)}\|_{Y^1}\| u_{N_3}^{(3)}\|_{Y^1}\\
&\lesssim \|u^{(4)}\|_{Y^1} \|u^{(5)}\|_{Y^1} \Bigl(\sum_{N_0} \|v_{N_0}\|_{Y^{-1}}^{2}\Bigr)^{1/2}\Bigl(\sum_{N_1} \|u_{N_1}^{(1)}\|_{Y^1}^{2}\Bigr)^{1/2}\\
&\qquad\qquad\qquad \times \Bigl(\sum_{N_2\geq N_3} \bigl(\tfrac{N_3}{N_2}\bigr)^{\frac12}\|u_{N_2}^{(2)}\|_{Y^1}^{2}\Bigr)^{1/2}
		\Bigl(\sum_{N_3\leq N_2} \bigl(\tfrac{N_3}{N_2}\bigr)^{\frac12}\|u_{N_3}^{(3)}\|_{Y^1}^{2}\Bigr)^{1/2}\\
&\lesssim \|v\|_{Y^{-1}} \prod_{j=1}^5 \|u^{(j)}\|_{Y^1}.
\end{align*}
This settles Case~I.1 because $X^1\hookrightarrow Y^1$.

\noindent\textbf{Case I.2: $N_0\lesssim N_1\sim N_2\geq N_3\geq N_4\geq N_5$}.  In this subcase, we do not need a bilinear estimate, only the Strichartz inequalities proved
in Theorem~\ref{T:Strichartz}:
\begin{align*}
&\sum_{N_0\lesssim N_1\sim N_2\geq \cdots\geq N_5} \Bigl| \int_0^T \int_{\T^d}  v_{N_0}(t,x) u_{N_1}^{(1)}(t,x)\ldots u_{N_5}^{(5)}(t,x) \, dx\, dt \Bigr|\\
&\lesssim \sum_{N_0\lesssim N_1\sim N_2\geq \cdots\geq N_5} \|v_{N_0}\|_{L_{t,x}^4} \|u_{N_1}^{(1)} \|_{L_{t,x}^4} \| u_{N_2}^{(2)}\|_{L_{t,x}^4} \| u_{N_3}^{(3)}\|_{L_{t,x}^4} \|u_{N_4}^{(4)}\|_{L_{t,x}^\infty}\|u_{N_5}^{(5)}\|_{L_{t,x}^\infty}\\
&\lesssim \sum_{N_0\lesssim N_1\sim N_2\geq \cdots\geq N_5} \tfrac{N_0^{\frac54} N_4^{\frac12} N_5^{\frac12}}{N_1^{\frac34} N_2^{\frac34}N_3^{\frac34}}\|v_{N_0}\|_{Y^{-1}} \|u_{N_1}^{(1)}\|_{Y^1}\| u_{N_2}^{(2)}\|_{Y^1}\| u_{N_3}^{(3)}\|_{Y^1} \| u_{N_4}^{(4)}\|_{Y^1}\| u_{N_5}^{(5)}\|_{Y^1}\\
&\lesssim \|v\|_{Y^{-1}}\prod_{j=3}^5\|u^{(j)}\|_{Y^1} \sum_{N_1\sim N_2} \bigl(\tfrac{N_1}{N_2}\bigr)^{\frac12}\| u_{N_1}^{(1)}\|_{Y^1}\| u_{N_2}^{(2)}\|_{Y^1}\\
&\lesssim \|v\|_{Y^{-1}} \prod_{j=1}^5 \|u^{(j)}\|_{Y^1}.
\end{align*}
This completes the proof of the proposition in the case of three space dimensions.  

\noindent\textbf{Case II: $d=4$}.  Again  we distinguish two subcases: either $N_0\sim N_1\geq N_2\geq N_3$ or $N_0\lesssim N_1\sim N_2\geq N_3$.

\noindent\textbf{Case II.1: $N_0\sim N_1\geq N_2\geq N_3$}.  Simply applying Lemma~\ref{L:bilinear1} as in Case~I.1 does not succeed because one is unable to sum the lower two frequencies; instead, we will exploit the main idea of the proof.  As there, let $P_{C_j}$ denote the family of Fourier projections onto a tiling of cubes of size $N_2$.  We write $C_j\sim C_k$ if the sum set overlaps the Fourier support of $P_{\leq 2 N_2}$.  Observe that given $C_k$ there are a bounded number of $C_j\sim C_k$.    Thus
\begin{align*}
&\sum_{N_0\sim N_1\geq N_2\geq N_3} \Bigl| \int_0^T \int_{\T^d} v_{N_0}^{\vphantom{(}}(t,x) u_{N_1}^{(1)}(t,x)u_{N_2}^{(2)}(t,x) u_{N_3}^{(3)}(t,x) \, dx\, dt \Bigr|\\
&\lesssim \sum_{N_0\sim N_1\geq N_2\geq N_3} \ \sum_{C_j\sim C_k} \bigl\|P_{C_j} v_{N_0}^{\vphantom{(}} \bigr\|_{L^{10/3}_{t,x}}   \bigl\|P_{C_k} u_{N_1}^{(1)} \bigr\|_{L^{10/3}_{t,x}}
		\bigl\| u_{N_2}^{(2)} \bigr\|_{L^{10/3}_{t,x}} \bigl\| u_{N_3}^{(3)}\bigr\|_{L^{10}_{t,x}}\\
&\lesssim \sum_{N_0\sim N_1\geq N_2\geq N_3} \ \sum_{C_j\sim C_k} \tfrac{N_0}{N_1} \bigl(\tfrac{N_3}{N_2}\bigr)^{\!\frac25} \bigl\|P_{C_j} v_{N_0}^{\vphantom{(}} \bigr\|_{Y^{-1}}
		\bigl\|P_{C_k} u_{N_1}^{(1)} \bigr\|_{Y^1} \bigl\| u_{N_2}^{(2)} \bigr\|_{Y^1} \bigl\| u_{N_3}^{(3)}\bigr\|_{Y^1}\\
&\lesssim \|u^{(2)}\|_{Y^1}\|u^{(3)}\|_{Y^1}  \sum_{N_0\sim N_1} \tfrac{N_0}{N_1} \|v_{N_0}\|_{Y^{-1}} \|u_{N_1}^{(1)}\|_{Y^1}\\
&\lesssim \|v\|_{Y^{-1}} \prod_{j=1}^3 \|u^{(j)}\|_{Y^1}.
\end{align*}

\noindent\textbf{Case II.2: $N_0\lesssim N_1\sim N_2\geq N_3$}.  We argue in the same manner as Case~I.2:
\begin{align*}
\sum_{N_0\lesssim N_1\sim N_2\geq N_3} &\Bigl| \int_0^T \int_{\T^d}  u_{N_1}^{(1)}(t,x)u_{N_2}^{(2)}(t,x) u_{N_3}^{(3)}(t,x) v_{N_0}(t,x)\, dx\, dt \Bigr|\\
&\lesssim \sum_{N_0\lesssim N_1\sim N_2\geq N_3} \bigl\|v_{N_0}\bigr\|_{L^{10/3}_{t,x}} \bigl\|u_{N_1}^{(1)}\bigr\|_{L^{10/3}_{t,x}} \bigl\|u_{N_2}^{(2)}\bigr\|_{L^{10/3}_{t,x}}  \bigl\|u_{N_3}^{(3)}\bigr\|_{L^{10}_{t,x}} \\
&\lesssim \sum_{N_0\lesssim N_1\sim N_2\geq N_3} \tfrac{N_0^{\frac65}N_3^{\frac25}}{N_1^{\frac45}N_2^{\frac45}}
	\bigl\|v_{N_0}\bigr\|_{Y^{-1}} \bigl\| u_{N_1}^{(1)}\bigr\|_{Y^1} \bigl\|u_{N_2}^{(2)}\bigr\|_{Y^1}\bigl \| u_{N_3}^{(3)}\bigr\|_{Y^1}\\
&\lesssim \|v\|_{Y^{-1}}\|u^{(3)}\|_{Y^1}  \sum_{N_1\sim N_2} \bigl(\tfrac{N_1}{N_2}\bigr)^{\!\frac25}\|u_{N_1}^{(1)}\|_{Y^1} \|u_{N_2}^{(2)}\|_{Y^1}\\
&\lesssim \|v\|_{Y^{-1}} \prod_{j=1}^3 \|u^{(j)}\|_{Y^1}.
\end{align*}
This completes the proof of the proposition when $d=4$.
\end{proof}

\begin{proof}[Proof of Theorem~\ref{T:EC}]
We first consider the case of \emph{small initial data}.  Fix $d\in \{3,4\}$ and let $u_0\in H^1(\T^d)$ satisfy
\begin{align*}
\|u_0\|_{H^1(\T^d)}\leq \eta\leq \eta_0
\end{align*}
for a small $\eta_0=\eta_0(d)$ to be chosen later.

We first note that by conservation of mass and energy, it suffices to construct the solution to the initial-value problem \eqref{NLS} on the time interval $[0,1]$.
Indeed, by Sobolev embedding,
$$
\|f\|_{L^{\frac{2d}{d-2}}(\T^d)}\lesssim_d \|f\|_{H^1(\T^d)}
$$
and so, in both the defocusing and the focusing cases we have
$$
M(u) + E(u)=\int_{\T^d} \tfrac12|u_0(x)|^2+ \tfrac12|\nabla u_0(x)|^2 \pm \tfrac{d-2}{2d} |u_0(x)|^{\frac{2d}{d-2}}\, dx \sim \|u_0\|_{H^1(\T^d)}^2,
$$
provided $\eta_0(d)$ is chosen sufficiently small.  Using a continuity argument together with the conservation of mass and energy, we deduce that this equivalence holds at all times of existence, namely,
$$
M(u) + E(u)\sim \|u(t)\|_{H^1(\T^d)}^2.
$$
Thus, a simple iteration argument allows us to extend the local-in-time solution to a global-in-time solution.

To construct the solution to \eqref{NLS} on the time interval $[0,1]$, we use a contraction mapping argument.  More precisely, we will show that the mapping
\begin{align}\label{Phi(u)}
\Phi(u)(t):= e^{it\Delta}u_0 \mp i \int_0^t e^{i(t-s)\Delta}F(u(s))\, ds
\end{align}
is a contraction on the ball
$$
B:=\bigl\{ u\in X^1([0,1])\cap C_tH^1_x([0,1]\times\T^d):\, \|u\|_{X^1([0,1])}\leq 2\eta\bigr\}
$$
under the metric
$$
d(u,v):=\|u-v\|_{X^1([0,1])}.
$$

Using Proposition~\ref{P:nonlinearity}, we see that for $u\in B$,
\begin{align*}
\|\Phi(u)\|_{X^1([0,1])}&\leq \|e^{it\Delta}u_0\|_{X^1([0,1])} + \Bigl\| \int_0^t e^{i(t-s)\Delta}F(u(s))\, ds \Bigr\|_{X^1([0,1])}\\
&\leq \|u_0\|_{H^1(\T^d)} + C \|u\|_{X^1([0,1])}^{\frac{d+2}{d-2}}\leq \eta+C(2\eta)^{\frac{d+2}{d-2}}\leq 2\eta,
\end{align*}
provided $\eta_0$ is chosen sufficiently small.  This proves $\Phi$ maps the ball $B$ to itself.

To see that $\Phi$ is a contraction under the metric $d$, we apply Proposition~\ref{P:nonlinearity} to $u, v\in B$ to get
\begin{align*}
d(\Phi(u),\Phi(v))&\leq \Bigl\| \int_0^t e^{i(t-s)\Delta}\bigl[F(u(s))-F(v(s))\bigr]\, ds \Bigr\|_{X^1([0,1])}\\
&\lesssim \|u-v\|_{X^1([0,1])} \bigl( \|u\|_{X^1([0,1])}+\|v\|_{X^1([0,1])}\bigr)^{\frac4{d-2}}\\
&\lesssim d(u,v) (4\eta)^{\frac4{d-2}}\\
&\leq\tfrac12 d(u,v),
\end{align*}
provided $\eta_0$ is chosen sufficiently small.

This completes the discussion of small initial data.  We now turn to the statement in Theorem~\ref{T:EC} concerning \emph{large initial data}.

Let $u_0\in H^1(\T^d)$ with
$$
\|u_0\|_{H^1(\T^d)}\leq A
$$
for some $0<A<\infty$.  Let $\delta>0$ be a small number to be chosen later (depending on $A$) and let $N=N(u_0)\geq 1$ be such that
$$
\| P_{>N} u_0\|_{H^1(\T^d)}\leq \delta.
$$
We will show that the mapping $\Phi(u)$ defined in \eqref{Phi(u)} is a contraction on the ball
$$
B:=\bigl\{ u\in X^1([0,T])\cap C_tH^1_x([0,T]\times\T^d):\, \|u\|_{X^1([0,T])}\leq 2A, \, \|u_{>N}\|_{X^1([0,T])}\leq 2\delta\bigr\}
$$
under the metric
$$
d(u,v):=\|u-v\|_{X^1([0,T])},
$$
provided $T$ is chosen sufficiently small (depending on $A$, $\delta$, and $N$).
For the remainder of the proof, all space-time norms will be on $[0,T]\times\T^d$.

First we verify that $\Phi$ maps $B$ to itself.  Using Remark~\ref{R}, Proposition~\ref{P:nonlinearity}, and Bernstein, for $u\in B$ we estimate
\begin{align*}
\|\Phi(u)\|_{X^1}
&\leq \|e^{it\Delta}u_0\|_{X^1}+\Bigl\| \int_0^t e^{i(t-s)\Delta}F(u_{\leq N}(s))\, ds\Bigr\|_{X^1}\\
&\quad +\Bigl\| \int_0^t e^{i(t-s)\Delta}\bigl[F(u(s))-F(u_{\leq N}(s))\bigr]\, ds\Bigr\|_{X^1}\\
&\leq \|u_0\|_{H^1(\T^d)} + C\|F(u_{\leq N})\|_{L_t^1 H^1_x} + C\|u_{>N}\|_{X^1} \|u\|_{X^1}^{\frac4{d-2}}\\
&\leq A+ CT\|u_{\leq N}\|_{L_t^\infty H^1_x} \|u_{\leq N}\|_{L_{t,x}^\infty}^{\frac4{d-2}} + C(2\delta) (2A)^{\frac4{d-2}}\\
&\leq A+ CTN^2 (2A)^{\frac{d+2}{d-2}}+ C(2\delta) (2A)^{\frac4{d-2}}\\
&\leq 2A,
\end{align*}
provided $\delta$ is chosen small enough depending on $A$, and $T$ is chosen small enough depending on $A$ and $N$.

We decompose
$$
F(u)=F_1(u) + F_2(u) \qtq{where} F_1(u)=O\bigl(u_{>N}^2 u^{\frac{6-d}{d-2}}\bigr) \qtq{and} F_2(u)=O\bigl(u_{\leq N}^{\frac4{d-2}}u\bigr).
$$
Here, $O$ aggregates terms of similar structure, where factors may additionally have complex conjugates and/or further Littlewood--Paley projections.
Arguing similarly to the above, we estimate
\begin{align*}
&\|P_{>N}\Phi(u)\|_{X^1}\\
&\leq \|e^{it\Delta} P_{>N}u_0\|_{X^1}+\Bigl\| \int_0^t e^{i(t-s)\Delta}F_1(u(s))\, ds\Bigr\|_{X^1}+\Bigl\| \int_0^t e^{i(t-s)\Delta}F_2(u(s))\, ds\Bigr\|_{X^1}\\
&\leq \|P_{>N}u_0\|_{H^1(\T^d)} + C\|u_{>N}\|_{X^1}^2 \|u\|_{X^1}^{\frac{6-d}{d-2}} + C\|F_2(u)\|_{L_t^1H_x^1}\\
&\leq \delta + C(2\delta)^2 (2A)^{\frac{6-d}{d-2}}+ CT\Bigl[\|\nabla u\|_{L_t^\infty L^2_x} \|u_{\leq N}\|_{L_{t,x}^\infty}^{\frac4{d-2}} + \|u\|_{L_t^\infty L_x^{\frac{2d}{d-2}}}N\|u_{\leq N}\|_{L_t^\infty L_x^{\frac{4d}{d-2}}}^{\frac4{d-2}}\Bigr]\\
&\leq \delta + C(2\delta)^2 (2A)^{\frac{6-d}{d-2}} + CTN^2 (2A)^{\frac{d+2}{d-2}}\\
&\leq 2\delta,
\end{align*}
provided $\delta$ is chosen small enough depending on $A$, and $T$ is chosen small enough depending on $A$, $\delta$, and $N$.

Next, we prove that $\Phi$ is a contraction.  We again decompose $F=F_1+F_2$ and observe that
$$
F_1(u)-F_1(v)=O\Bigl((u-v)(u_{>N}+v_{>N})\bigl(u^{\frac{6-d}{d-2}}+v^{\frac{6-d}{d-2}}\bigr)\Bigr)
$$
and
$$
F_2(u)-F_2(v)=O\Bigl((u-v)\bigl(u_{\leq N}+v_{\leq N}\bigr)^{\frac4{d-2}}\Bigr)
	+ O\Bigl( (u_{\leq N}-v_{\leq N}) (u+v) \bigl(u_{\leq N}+v_{\leq N}\bigr)^{\frac{6-d}{d-2}}\Bigr).
$$
Employing Remark~\ref{R}, Proposition~\ref{P:nonlinearity}, and Bernstein as before, for $u,v\in B$ we estimate
\begin{align*}
&d\bigl(\Phi(u), \Phi(v)\bigr)\\
&\lesssim \|u-v\|_{X^1} \bigl(\|u_{>N}\|_{X^1} +\|v_{>N}\|_{X^1}\bigr)\bigl(\|u\|_{X^1} +\|v\|_{X^1}\bigr)^{\frac{6-d}{d-2}}+ \|F_2(u)-F_2(v)\|_{L_t^1H_x^1}\\
&\lesssim (4\delta) (4A)^{\frac{6-d}{d-2}} d(u,v) + T\|\nabla (u-v)\|_{L_t^\infty L^2_x} \bigl(\|u_{\leq N}\|_{L_{t,x}^\infty}+\|v_{\leq N}\|_{L_{t,x}^\infty}\bigr)^{\frac4{d-2}} \\
&\quad+ T\|u-v\|_{L_t^\infty L_x^{\frac{2d}{d-2}}}N\bigl(\|u_{\leq N}\|_{L_t^\infty L_x^{\frac{4d}{d-2}}}+\|v_{\leq N}\|_{L_t^\infty L_x^{\frac{4d}{d-2}}}\bigr)^{\frac4{d-2}}\\
&\quad+ T\bigl(\|\nabla u\|_{L_t^\infty L_x^2}+\|\nabla v\|_{L_t^\infty L_x^2}\bigr)\|u_{\leq N}-v_{\leq N}\|_{L_{t,x}^\infty}\bigl(\|u_{\leq N}\|_{L_{t,x}^\infty}+\|v_{\leq N}\|_{L_{t,x}^\infty}\bigr)^{\frac{6-d}{d-2}}\\
&\quad+ T\bigl(\|u\|_{L_t^\infty L_x^{\frac{2d}{d-2}}}+\|v\|_{L_t^\infty L_x^{\frac{2d}{d-2}}}\bigr)N\|u_{\leq N}-v_{\leq N}\|_{L_t^\infty L_x^{\frac{4d}{d-2}}}\\
&\qquad\qquad\qquad\qquad\qquad\qquad\times\bigl(\|u_{\leq N}\|_{L_t^\infty L_x^{\frac{4d}{d-2}}}+\|v_{\leq N}\|_{L_t^\infty L_x^{\frac{4d}{d-2}}}\bigr)^{\frac{6-d}{d-2}}\\
&\lesssim \bigl[(4\delta) (4A)^{\frac{6-d}{d-2}} + TN^2 (4A)^{\frac4{d-2}}\bigr]d(u,v) \\
&\leq \tfrac12d(u,v),
\end{align*}
provided $\delta$ is chosen small enough depending on $A$, and $T$ is chosen small enough depending on $A$ and $N$.

By the contraction mapping theorem, this allows us to construct a unique solution $u$ to \eqref{NLS} in the ball $B$.  To see that uniqueness holds in the larger class $X^1([0,T])\cap C_tH^1_x([0,T]\times\T^d)$, we need only observe that if $v\in X^1([0,T])\cap C_tH^1_x([0,T]\times\T^d)$ is a second solution to \eqref{NLS} with data $v(0)=u_0$, then there exists $N_0\geq 1$ such that
$$
\|v_{>N_0}\|_{X^1([0,T])}\leq 2\delta.
$$
Choosing the larger of $N$ and $N_0$, we find a new ball $B$ that contains both $u$ and $v$.  In this way, the contraction mapping argument guarantees $u=v$ on a possibly smaller interval $[0,T']$.  Iterating this argument yields uniqueness in the larger class.
\end{proof}

\bibliographystyle{amsplain}
\bibliography{Strichartz}

\providecommand{\bysame}{\leavevmode\hbox to3em{\hrulefill}\thinspace}
\providecommand{\MR}{\relax\ifhmode\unskip\space\fi MR }
\providecommand{\MRhref}[2]{%
  \href{http://www.ams.org/mathscinet-getitem?mr=#1}{#2}
}
\providecommand{\href}[2]{#2}
\begin{thebibliography}{10}

\bibitem{Bourgain:Lambda(p)}
J.~Bourgain, \emph{On {$\Lambda(p)$}-subsets of squares}, Israel J. Math.
  \textbf{67} (1989), no.~3, 291--311. \MR{1029904 (91d:43018)}

\bibitem{Bourgain:TorusStrichartz}
\bysame, \emph{Fourier transform restriction phenomena for certain lattice
  subsets and applications to nonlinear evolution equations. {I}.
  {S}chr\"odinger equations}, Geom. Funct. Anal. \textbf{3} (1993), no.~2,
  107--156. \MR{1209299 (95d:35160a)}

\bibitem{Bourgain:TorusIrrat}
\bysame, \emph{On {S}trichartz's inequalities and the nonlinear {S}chr\"odinger
  equation on irrational tori}, Mathematical aspects of nonlinear dispersive
  equations, Ann. of Math. Stud., vol. 163, Princeton Univ. Press, Princeton,
  NJ, 2007, pp.~1--20. \MR{2331676 (2008j:35165)}

\bibitem{Bourgain:Curved}
\bysame, \emph{Moment inequalities for trigonometric polynomials with spectrum
  in curved hypersurfaces}, Israel J. Math. \textbf{193} (2013), no.~1,
  441--458. \MR{3038558}

\bibitem{BourgainDemeter:l2}
J.~Bourgain and C.~Demeter, \emph{The proof of the $\ell^2$ decoupling
  conjecture}, Preprint \texttt{arXiv:1403.5335}.

\bibitem{Demeter:Incidence}
C.~Demeter, \emph{Incidence theory and restriction estimates}, Preprint
  \texttt{arXiv:1401.1873}.

\bibitem{GuoOhWang}
Z.~Guo, T.~Oh, and Y.~Wang, \emph{Strichartz estimates for {S}chr\"odinger
  equations on irrational tori}, Preprint \texttt{arXiv:1306.4973} to appear in
  Proc. London Math. Soc.

\bibitem{HardyWright}
G.~H. Hardy and E.~M. Wright, \emph{An introduction to the theory of numbers},
  fifth ed., The Clarendon Press, Oxford University Press, New York, 1979.
  \MR{568909 (81i:10002)}

\bibitem{HTT}
S.~Herr, D.~Tataru, and N.~Tzvetkov, \emph{Global well-posedness of the
  energy-critical nonlinear {S}chr\"odinger equation with small initial data in
  {$H^1(\Bbb T^3)$}}, Duke Math. J. \textbf{159} (2011), no.~2, 329--349.
  \MR{2824485 (2012j:35392)}

\bibitem{HTT:4d}
\bysame, \emph{Strichartz estimates for partially periodic solutions to
  {S}chr\"odinger equations in {$4d$} and applications}, J. Reine Angew. Math.
  \textbf{690} (2014), 65--78. \MR{3200335}

\bibitem{KochTatVis}
H.~Koch, D.~Tataru, and M.~Visan, \emph{Dispersive equations and nonlinear
  waves}, Oberwolfach Seminars, vol.~45, Birkh\"auser/Springer Basel AG, Basel,
  2014.

\bibitem{Strunk}
N.~Strunk, \emph{Strichartz estimates for {S}chr\"odinger equations on
  irrational tori in two and three dimensions}, Preprint
  \texttt{arXiv:1401.6080}.

\end{thebibliography}

\end{document}